\def\l@subsection{\@tocline{2}{0pt}{1pc}{5pc}{}} \def\l@subsection{\@tocline{2}{0pt}{2pc}{6pc}{}}
\declaretheoremstyle[headfont=\bfseries,bodyfont=\itshape]{myplain}
\declaretheoremstyle[headfont=\bfseries,bodyfont=\normalfont]{mydefinition}
\declaretheoremstyle[headfont=\bfseries,bodyfont=\normalfont,qed=$\diamond$]{myexample}
\declaretheoremstyle[headfont=\itshape,bodyfont=\normalfont, qed=$\diamond$]{myremark}
\declaretheorem[style=myplain, sharenumber=theo, name=Lemma]{lemma}
\declaretheorem[style=myplain, sharenumber=theo, name=Corollary]{coroll}
\declaretheorem[style=myplain, sharenumber=theo, name=Proposition]{prop}
\declaretheorem[style=myplain, sharenumber=theo, name=Proposition/Definition]{prop/def}
\declaretheorem[style=mydefinition, sharenumber=theo, name=Definition]{definition}
\newtheorem*{theorem*}{Theorem}
\declaretheorem[style=myexample, sharenumber=theo, name=Example]{example}
\declaretheorem[style=myremark, sharenumber=theo, name=Remark]{rem}
\numberwithin{equation}{section}
\begin{document}
	\title{Basic sections of $\mathcal{LA}$-groupoids}
	
	\author{Antonio Maglio}
	\address{Institute of Mathematics, Polish Academy of Sciences, \'Sniadeckich 8, 00-656 Warszawa, Poland}
	\curraddr{}
	\email{\href{mailto:amaglio@impan.pl}{amaglio@impan.pl}}
	\thanks{}
	
	\author{Fabricio Valencia}
	\address{Instituto de Matem\'atica e Estat\'istica, Universidade de S\~ao Paulo, Rua do Mat\~ao 1010, Cidade Universit\'aria, 05508-090 S\~ao Paulo, Brazil}
	\email{\href{mailto:fabricio.valencia@ime.usp.br}{fabricio.valencia@ime.usp.br}}
	
\begin{abstract}
We define the notion of basic section of an $\mathcal{LA}$-groupoid whose core-anchor map is injective. Such a notion turns out to be Morita invariant, so that it provides a simpler model for the sections of the stacky Lie algebroids presented by such $\mathcal{LA}$-groupoids, yet equivalent to the well-known model provided by their multiplicative sections.
\end{abstract}
	
	\maketitle
	
	\section{Introduction}
	 Lie groupoids constitute a framework which has received much attention in recent years since they generalize manifolds, Lie groups, Lie group actions, submersions, foliations, pseudogroups, vector bundles, and principal bundles, among others, thus providing a new perspective on classical geometric questions and results. These geometric objects can be viewed as an intermediate step in defining differentiable stacks, which are spaces admitting singularities and generalizing manifolds, orbifolds and leaf spaces of singular foliations. Recently, Lie groupoids equipped with geometric structures suitably compatible with Morita equivalence have been the object of intense research. This is due to the fact that such structures descend to the quotient stack of a Lie groupoid, giving rise to the extension of several geometric notions over singular orbit spaces. For instance, the notion of Morita equivalence of $VB$-groupoids plays a role in defining vector bundles over differentiable stacks \cite{dHO20}. Additionally, the space of multiplicative sections of an $\mathcal{LA}$-groupoid has the structure of a Lie 2-algebra which is Morita invariant, so that the space of vector fields on a differentiable stack has attached a natural structure of Lie $2$-algebra \cite{OW19}. In particular, if we restrict our attention to the case of differentiable stacks presented by foliation groupoids then there is a simpler model for the stacky vector fields which is constructed in terms of a notion of basic vector field \cite{HS21}. The injectivity assumption imposed on the anchor map of a foliation groupoid turns out to be closely related to the notion of $0$-shifted symplectic structure. In this regard, basic vector fields have been used to describe a Morita invariant reduction of Lie groupoids equipped with $0$-shifted symplectic structures under Hamiltonian actions of foliation Lie $2$-groups \cite{HS21}.

Building upon the idea of basic vector field on a foliation groupoid, in this short paper we generalize the related constructions from \cite{HS21} regarding this notion to study the space of sections of an $\mathcal{LA}$-groupoid whose core-anchor map is injective. Surprisingly, we can introduce a simpler approach to describe basic sections over this sort of $\mathcal{LA}$-groupoids by applying some of the general machinery developed in \cite{dHO20,MTV24,OW19}.

Our main result can be summarized in the following way.

\begin{theorem*}
	Let $(V\rightrightarrows E; G\rightrightarrows M)$ be an $\mathcal{LA}$-groupoid with core $C$ whose core-anchor map $\partial\colon C\to E$ is injective. Then, there exists a Lie algebra structure on the space of basic sections on $E/\partial(C)$ which is invariant under Morita equivalence. Furthermore, such a Lie algebra is quasi-isomorphic to the Lie $2$-algebra of multiplicative sections of $V$. 
\end{theorem*}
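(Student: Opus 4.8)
The plan is to read the statement through the representation up to homotopy attached to the underlying $VB$-groupoid of $(V\rightrightarrows E; G\rightrightarrows M)$, and to exploit the injectivity of $\partial$ to collapse this $2$-term representation to a genuine one. Recall that the $VB$-groupoid structure determines a $2$-term representation up to homotopy of $G\rightrightarrows M$ on the complex $C\xrightarrow{\partial}E$; its cohomology is $\ker\partial$ in one degree and $\mathrm{coker}\,\partial=E/\partial(C)$ in the other. Under the hypothesis that $\partial$ is injective we have $\ker\partial=0$, so this representation up to homotopy is quasi-isomorphic to a \emph{genuine} representation of $G$ on the quotient bundle $W:=E/\partial(C)$. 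I would then \emph{define} the basic sections to be the $G$-invariant (flat) sections $\Gamma(W)^{G}$, which is the natural degree-$0$ invariant of the collapsed representation.

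Next I would produce the Lie algebra structure. The $\mathcal{LA}$-groupoid structure makes $E\to M$ a Lie algebroid, so $\Gamma(E)$ carries a bracket, and $\partial\colon C\to E$ is a morphism of Lie algebroids, whence $\partial(C)$ is an involutive subbundle. Because $\partial$ is injective, $\Gamma(W)=\Gamma(E)/\Gamma(\partial(C))$, and I would represent a basic section by a lift $e\in\Gamma(E)$ whose class is $G$-invariant. The invariance condition is exactly what guarantees $[e,\partial(\Gamma(C))]\subseteq\partial(\Gamma(C))$, so that the Lie algebroid bracket of two such lifts is again of this type and descends to a well-defined, lift-independent bracket on $\Gamma(W)^{G}$. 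The Jacobi identity is then inherited immediately from that of $E$.

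For the comparison with multiplicative sections I would use the model from \cite{OW19,MTV24}, in which the Lie $2$-algebra of multiplicative sections of $V$ is presented by a $2$-term complex $\Gamma(C)\xrightarrow{\lambda}\Gamma_{\mathrm{mult}}(V)$, the map $\lambda$ sending a core section to its associated inner multiplicative section. The core of the argument is to identify its cohomology with the Lie algebra of basic sections: injectivity of $\partial$ forces $\lambda$ to be injective (since the base component of $\lambda(c)$ is, up to sign, $\partial c$), so that $H_{1}=0$; and restricting a multiplicative section to its base component and reducing modulo $\partial(C)$ yields a map $\Gamma_{\mathrm{mult}}(V)\to\Gamma(W)^{G}$ whose kernel is $\lambda(\Gamma(C))$, giving $H_{0}\cong\Gamma(W)^{G}$. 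The quasi-isomorphism would then be the one induced by the quasi-isomorphism of representations above, and I would verify that it intertwines the bracket of multiplicative sections with the descended Lie algebroid bracket.

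Morita invariance I would obtain from the general machinery of \cite{dHO20}: representations up to homotopy of $VB$-groupoids are Morita invariant, a Morita equivalence carries the genuine representation $W$ and its invariant sections to the corresponding data for the equivalent $\mathcal{LA}$-groupoid, and the bracket on basic sections is defined intrinsically from $E$ together with the invariance condition, hence is preserved. I expect the main obstacle to be the final part of the comparison step, namely checking that the induced quasi-isomorphism respects the \emph{full} $L_\infty$-structure of the multiplicative-sections Lie $2$-algebra—in particular that its Jacobiator and higher brackets are annihilated in the passage to basic sections—and that the surjectivity of $\Gamma_{\mathrm{mult}}(V)\to\Gamma(W)^{G}$ (equivalently, the liftability of an invariant section of $W$ to a multiplicative section of $V$) holds; both of these hinge delicately on the injectivity of $\partial$.
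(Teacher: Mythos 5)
Your overall architecture parallels the paper's: the paper also identifies basic sections with the $G$-invariant sections $\Gamma(E/C)^G$ (Remark \ref{rem:trivial_and_iso_core}), obtains the bracket from lifts lying in the normalizer $N(\Gamma(C))\subset\Gamma(E)$ via the flat-section Lie algebra $\Gamma_0(E)\cong N(\Gamma(C))/\Gamma(C)$, and compares with multiplicative sections through the base-component map $\overline{(\lambda,e)}\mapsto\overline{e}$. But there are two genuine gaps. First, your assertion that invariance of the class of $e$ is \emph{exactly} the condition $[e,\partial(\Gamma(C))]\subseteq\partial(\Gamma(C))$ conflates the global condition ($G$-invariance) with its infinitesimal shadow (flatness for the Bott representation of $C$ on $E/C$); these coincide only for source-connected $G$, and in general $\Gamma(E/C)^G$ is a \emph{proper} subspace of $\Gamma_0(E)$. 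The normalizer property gives a well-defined, lift-independent bracket on $\Gamma_0(E)$, but it does not show that the bracket of two $G$-invariant classes is again $G$-invariant, which is what you need for a Lie algebra structure on $\Gamma(E/C)^G$. This closure is the actual content of Proposition \ref{prop:subalgebra}, where basic sections are exhibited as the equalizer of the two Lie algebra morphisms $\tilde{s}^\ast,\tilde{t}^\ast\colon\Gamma_0(E)\to\Gamma_0(V)$; some such global argument is indispensable.

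Second, the quasi-isomorphism. Since the crossed module $\Gamma(C)\xrightarrow{\delta}\Gamma_{\operatorname{mult}}(V)$ is strict, your worry about Jacobiators and higher brackets is moot; the real content is precisely the two points you leave open. The kernel identification $\ker\bigl(\Gamma_{\operatorname{mult}}(V)\to\Gamma(E/C)^G\bigr)=\operatorname{im}(\delta)$ is not formal: a multiplicative section covering a section of $\partial(C)$ differs from an element of $\operatorname{im}(\delta)$ by a multiplicative section covering zero, and such a section vanishes only because $\ker\tilde{s}\cap\ker\tilde{t}=0$ (Lemma \ref{lemma:quotient}). More seriously, you give no argument for surjectivity (liftability of an invariant section to a multiplicative one), which you correctly flag as the main obstacle. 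The paper sidesteps the lifting problem entirely: $V$ is $VB$-Morita equivalent to the trivial-core $VB$-groupoid $V'$ built from the genuine representation $E/C$ (Remark \ref{rem:me_trivial_core}), and Morita invariance of the crossed module of multiplicative sections (Remark 3.2 and Corollary 7.2 in \cite{OW19}) then yields $\Gamma_{\operatorname{mult}}(V)/\operatorname{im}(\delta)\cong\Gamma(E/C)^G$. (A direct lift is also possible: invariance together with $\ker\tilde{s}_g\cap\ker\tilde{t}_g=0$ determines a unique $\lambda(g)\in V_g$ with prescribed source and target, and uniqueness forces multiplicativity; but some argument must be supplied.) Relatedly, your Morita-invariance step is only a plan: preservation of the \emph{bracket} is not automatic from being ``intrinsically defined,'' since an equivalence changes $E$; the paper needs Remark \ref{rem:Lie_morphism}, Lemma \ref{lemma:natural_transformation}, and the two-step reduction in Proposition \ref{prop:morita_invariance} (quasi-inverses with linear natural transformations for maps covering $\operatorname{id}_G$, then the pullback $VB$-groupoid $f^\ast V'$ for a general base map).
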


It is important to stress that this result also applies to describe basic sections of other interesting constructions where multiplicative or basic geometric structures over Lie groupoids yield associated $\mathcal{LA}$-groupoids. Namely, our initial motivation comes from trying to define what would be a basic derivation of a line bundle groupoid, an object that is closely related to the notion of $0$-shifted contact structure recently introduced in \cite{MTV24}. This is because those basic derivations remarkably allow us to obtain a reduction procedure for line bundle groupoids equipped with $0$-shifted contact structures invariant under the action of foliation Lie $2$-groups \cite{MV}.

The work is structured as follows. In Section \ref{sec:2}, we briefly introduce the fundamental facts about $VB$-groupoids that we shall be using throughout our study, focusing on defining the Lie 2-algebra of multiplicative sections associated with any $\mathcal{LA}$-groupoid. In Section \ref{sec:3}, we describe the notion of basic sections of a $VB$-groupoid whose core-anchor map is injective, proving that, in the case of an $\mathcal{LA}$-groupoid, the space of all basic sections inherits a natural Lie algebra structure. Such a Lie algebra is Morita invariant, so that it can be thought of as the space of sections of the stacky Lie algebroid presented by this kind of  $\mathcal{LA}$-groupoid. We also show that our approach provides a model for stacky sections which is still equivalent to the well-known model provided by multiplicative sections of $\mathcal{LA}$-groupoids. Finally, in Section \ref{sec:4}, we illustrate our constructions with several interesting examples where the main result of this work applies. Appendix \ref{appendix} contains some basic facts about Lie algebroids which are used throughout the sections.

	\vspace{.2cm}
	{\bf Acknowledgments:}  We are very thankful to Cristián Ortiz and Luca Vitagliano for enlightening discussion. We are indebted to the anonymous referee because all her/his corrections, comments, and suggestions greatly improved this work. The research of A.~M.~was funded by the National Science Centre (Poland) within the project WEAVE-UNISONO, No. 2023/05/Y/ST1/00043. A.~M.~was also partially supported by GNSAGA of INdAM. F.~V.~was supported by Grant 2024/14883-6 S\~ao Paulo Research Foundation - FAPESP.
	
	\section[\texorpdfstring{$\mathcal{LA}$-groupoids}{LA-groupoids}]{\texorpdfstring{$\mathcal{LA}$-groupoids and their multiplicative sections}{LA-groupoids}}\label{sec:2}
	
In this short section we briefly introduce the elementary notions and constructions that we will be using throughout the work. The reader is recommended to consult \cite{GSM17,OW19} for specific details.

We think of a \emph{$VB$-groupoid} as a vector bundle internal to the category of Lie groupoids. That is to say, a $VB$-groupoid is a commutative diagram
	\begin{equation}
		\label{eq:VBG}
		\begin{tikzcd}
			V \arrow[r, shift left=0.5ex] \arrow[r, shift right= 0.5ex] \arrow[d] & E \arrow[d]\\
			G \arrow[r, shift left=0.5ex] \arrow[r, shift right=0.5ex] & M
		\end{tikzcd}
	\end{equation}
	where $V\rightrightarrows E$ and $G\rightrightarrows M$ are Lie groupoids, $V\to G$ and $E\to M$ are vector bundles ($VB$ in what follows), and all the Lie groupoid structure maps of $V$ are $VB$-morphisms covering the Lie groupoid structure maps of $G$. We denote by $s,t,u, m$ and $i$ the source, target, unit, composition and inversion maps of $G$ and by $\tilde{s}, \tilde{t}, \tilde{u}, \tilde{m}$ and $\tilde{i}$ the corresponding ones of $V$. Moreover, we denote the $VB$-groupoid \eqref{eq:VBG} as the pair $(V\rightrightarrows E; G\rightrightarrows M)$ or only as $V$ if there is not risk of confusion. The \emph{core} $C$ of a $VB$-groupoid $V$ is the vector bundle $C$ over $M$ given by
	\[
		C= \ker(\tilde{s})|_M.
	\]
	
	The \emph{core-anchor map} of $V$ is the $VB$-morphism given by the restriction of the target $\tilde{t}$ to the core $C$. This will be denoted by $\partial \colon C\to E$. Accordingly, the \emph{core complex} of $V$ is the $2$-term complex given by the core-anchor map:
	\[
		\begin{tikzcd}
			0 \arrow[r] & C \arrow[r, "\partial"] & E \arrow[r] & 0.
		\end{tikzcd}
	\]
	
	Any section $c$ of the core $C$ of a $VB$-groupoid $V$ determines a \emph{right invariant section} $\overrightarrow{c}$ and a \emph{left invariant section} $\overleftarrow{c}$ of $V$ by setting
	\[
		\overrightarrow{c}_g= c_{t(g)}\cdot 0_g \in V_g \quad \text{and}\quad \overleftarrow{c}_g= -0_g\cdot c_{s(g)}^{-1}, \quad g\in G.
	\]

	A \emph{$VB$-groupoid morphism} is a pair $(F,f)\colon (V\rightrightarrows E; G\rightrightarrows M)\to (V'\rightrightarrows E'; G'\rightrightarrows M')$, where $F\colon (V\rightrightarrows E)\to (V'\rightrightarrows E')$ and $f\colon (G\rightrightarrows M)\to (G'\rightrightarrows M')$ are Lie groupoid morphisms, and $(F,f)\colon (V\to G)\to (V'\to G')$ and $(F,f)\colon (E\to M)\to (E'\to M')$ are vector bundle morphisms. In these terms, a \emph{$VB$-Morita map} is defined to be a $VB$-groupoid morphism $(F,f)$ such that $F$ is also a Morita map, see \cite{dHO20}. Besides, a \emph{linear natural transformation} between two $VB$-groupoid morphisms $F,K\colon V\to V'$ is a $VB$-morphism $\alpha\colon E\to V'$ that is also a natural transformation, compare \cite{dHO20,MTV24}.
	
	\begin{definition}
	An \emph{$\mathcal{LA}$-groupoid} is a $VB$-groupoid $(V\rightrightarrows E; G\rightrightarrows M)$, where both $V$ and $E$ are Lie algebroids over $G$ and $M$, respectively, and all the structure maps of $V$ are Lie algebroid morphisms. Additionally, 	an \emph{$\mathcal{LA}$-groupoid morphism} is a $VB$-groupoid morphism $(F,f)$ between two $\mathcal{LA}$-groupoids $V$ and $V'$ such that $(F,f)\colon (V\to G)\to (V'\to G')$ and $(F,f)\colon (E\to M)\to (E'\to M')$ are Lie algebroid morphisms. 
	\end{definition}
	
	Let $V$ be an $\mathcal{LA}$-groupoid. In this case, the core $C$ of $V$ possesses a Lie algebroid structure whose anchor map is determined by the composition of the core-anchor map $\partial\colon C\to E$ with the anchor $\rho_E\colon E\to TM$ of $E$, and the bracket $[-,-]_C$ maps two sections $c,c'\in \Gamma(C)$ into the unique section of $C$ such that $\overrightarrow{[c,c']_C}= [\overrightarrow{c}, \overrightarrow{c'}]_V\in \Gamma(V)$. Such an algebroid structure is defined in such a way that the core-anchor map $\partial$ becomes a Lie algebroid morphism. Also, an $\mathcal{LA}$-groupoid morphism $(F,f)$ from $V$ to $V'$ determines a Lie algebroid morphism from the core $C$ of $V$ to the core $C'$ of $V'$.
	
An \emph{$\mathcal{LA}$-Morita map} is defined to be an $\mathcal{LA}$-groupoid morphism $(F,f)$ such that $F$ is also a Morita map, see \cite{OW19}. The latter concept induces a notion of Morita equivalence of $\mathcal{LA}$-groupoids by saying that two $\mathcal{LA}$-groupoids $V$ and $V'$ are \emph{Morita equivalent} if there exists a third $\mathcal{LA}$-groupoid $W$ and two $\mathcal{LA}$-Morita maps $W\to V$ and $W\to V'$. This gives rise to an equivalence relation which, in turn, allows us to speak about a notion of \emph{stacky Lie algebroid} by considering the equivalence class of an $\mathcal{LA}$-groupoid up to Morita equivalence. See \cite{Wa15} for specific details regarding the notion of Lie algebroid over a differentiable stack.

We are interested in looking at the Lie 2-algebra of multiplicative sections of an $\mathcal{LA}$-groupoid, that is the one providing a model for the sections of a stacky Lie algebroid \cite{OW19}. A \emph{multiplicative section} of an $\mathcal{LA}$-groupoid $(V\rightrightarrows E; G\rightrightarrows M)$ is a pair $(\lambda, e)\in \Gamma(V)\oplus\Gamma(E)$ such that $\lambda \colon G\to V$ is a Lie groupoid morphism covering $e\colon M\to E$. We denote by $\Gamma_{\operatorname{mult}}(V)$ the space of multiplicative sections of $V$. This inherits a Lie algebra structure out of the $\mathcal{LA}$-groupoid structure of $V$. As shown in \cite{OW19}, there exists a Lie 2-algebra of multiplicative sections on $V$ which can be codified in terms of the following crossed module of Lie algebras. If $ \partial\colon C \to E$ is the core-anchor map of $V$, then the crossed module of multiplicative sections of $V$ is given by
		\[
			\begin{tikzcd}
				\Gamma(C)\arrow[r, "\delta"] & \Gamma_{\operatorname{mult}}(V)\arrow[r, "D"] & \operatorname{Der}(\Gamma(C)),
			\end{tikzcd}
		\]
		where $\delta$ maps a section $c\in \Gamma(C)$ to the pair $(\overrightarrow{c}-\overleftarrow{c}, \partial(c))\in \Gamma(V)\oplus \Gamma(E)$ and $D$ maps the pair $(\lambda, e)$ to the derivation $D_{(\lambda, e)}$ defined by $D_{(\lambda, e)}(c)=
			[\overrightarrow{c}, \lambda]|_M \in \Gamma(C)$ for all $c\in \Gamma(C)$. Remarkably, such a crossed module of Lie algebras is Morita invariant, meaning that $\mathcal{LA}$-groupoids related by an $\mathcal{LA}$-Morita map determine quasi-isomorphic crossed modules of multiplicative sections. In consequence, we can think of the quotient Lie algebra $\Gamma_{\operatorname{mult}}(V)/\textnormal{im}(\delta)$ as the space of sections of the stacky Lie algebroid presented by $V$.

\begin{rem}
We note that a broader notion, providing an alternative candidate for the concept of a Lie algebroid over a differentiable stack, has been proposed recently in \cite{AC24}. In that work, the authors introduced the notion of \emph{quasi $\mathcal{LA}$-groupoids}, arguing that these objects furnish the appropriate framework for defining Lie algebroids over differentiable stacks.
\end{rem}

	\section{The Lie algebra of basic sections}\label{sec:3}
	
In this part of the work, we introduce the notion of a basic section for a $VB$-groupoid $V$ with injective core-anchor map, and we discuss the Lie algebra structure carried by the space of basic sections in the case where $V$ is an $\mathcal{LA}$-groupoid. Firstly, this will allow us to provide a simpler model for the sections of the stacky Lie algebroids presented by $\mathcal{LA}$-groupoids with injective core-anchor map. Secondly, such a notion of basic section generalizes the notion of basic vector field over foliation groupoids introduced and studied in \cite{HS21}, but still applies to other interesting constructions where multiplicative or basic geometric structures over Lie groupoids give rise to associated $\mathcal{LA}$-groupoids.

We begin with the following standard result on $VB$-groupoids.
\begin{lemma}
	\label{lemma:quotient}		
	Let $(V\rightrightarrows E; G\rightrightarrows M)$ be a $VB$-groupoid. For any arrow $g\colon x\to y$ in $G$, we get that
	\begin{itemize}
		\item the linear isomorphism $\ker \tilde{s}_y \to \ker \tilde{s}_g$, $u_y\mapsto u_y \cdot 0_g$ induces the following linear isomorphisms
		\begin{equation*}
			\ker \tilde{s}_y \cap \ker \tilde{t}_y \to \ker\tilde{s}_g \cap \ker \tilde{t}_g, \quad
			\frac{\ker \tilde{s}_y}{\ker \tilde{s}_y\cap \ker \tilde{t}_y}\to \frac{\ker \tilde{s}_g}{\ker \tilde{s}_g\cap \ker \tilde{t}_g},
		\end{equation*}
		\item the linear isomorphism $\ker \tilde{s}_x \to \ker \tilde{t}_g$, $u_x\mapsto 0_g \cdot u_x^{-1}$ induces the following linear isomorphisms
		\begin{equation*}
			\ker \tilde{s}_x \cap \ker \tilde{t}_x \to \ker\tilde{s}_g \cap \ker \tilde{t}_g, \quad
			\frac{\ker \tilde{s}_x}{\ker \tilde{s}_x\cap \ker \tilde{t}_x}\to \frac{\ker \tilde{s}_g}{\ker \tilde{s}_g\cap \ker \tilde{t}_g}.
		\end{equation*}
	\end{itemize}

	In particular, the core-anchor map of $V$ is injective if and only if $\ker \tilde{s}_g\cap \ker \tilde{t}_g =\{0_g\}$ for all $g\in G$.
\end{lemma}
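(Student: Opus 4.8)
The plan is to treat both bulleted claims uniformly: each asserts that a right/left translation by the zero element $0_g$ (precomposed with inversion in the second case) is a linear isomorphism of core-type fibers, and that this isomorphism is compatible enough with the \emph{remaining} structure map to restrict to the intersection of kernels and descend to the quotient. I would organize everything around two identities governing how translation interacts with $\tilde{t}$ and $\tilde{s}$, together with the $VB$-structure that makes the translations linear.

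For the top bullet, write $\Phi=R_{0_g}\colon u_y\mapsto u_y\cdot 0_g$. First I would justify that $\Phi$ is a well-defined linear isomorphism $\ker\tilde{s}_y\to\ker\tilde{s}_g$: composability holds because $\tilde{s}(u_y)=0_y=\tilde{t}(0_g)$ for $u_y\in\ker\tilde{s}_y$; linearity follows from $\tilde{m}$ being a $VB$-morphism together with the observation that $u_y\mapsto(u_y,0_g)$ is linear into the fiber of $V\times_E V$ over $(1_y,g)$, since $0_g+0_g=0_g$; and bijectivity follows because $R_{0_{g^{-1}}}$ is a two-sided inverse, using $0_g\cdot 0_{g^{-1}}=0_{1_y}=\tilde{u}(0_y)$ and the unit law. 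The crucial point is then the groupoid identity $\tilde{t}(u_y\cdot 0_g)=\tilde{t}(u_y)$ (the target of a product equals the target of the left factor), which gives $\Phi(u_y)\in\ker\tilde{t}_g\iff u_y\in\ker\tilde{t}_y$. Since $\Phi$ is bijective and $\Phi(u_y)$ automatically lies in $\ker\tilde{s}_g$ (as $\tilde{s}(u_y\cdot 0_g)=\tilde{s}(0_g)=0_x$), this biconditional shows that $\Phi$ restricts to a bijection $\ker\tilde{s}_y\cap\ker\tilde{t}_y\to\ker\tilde{s}_g\cap\ker\tilde{t}_g$.

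For the lower bullet, write $\Psi\colon u_x\mapsto 0_g\cdot u_x^{-1}$ and factor it as $\Psi=L_{0_g}\circ\tilde{i}$, where $\tilde{i}$ restricts to a linear isomorphism $\ker\tilde{s}_x\to\ker\tilde{t}_x$ (inversion is a fiberwise linear involution sending $\ker\tilde{s}$ to $\ker\tilde{t}$ over units) and $L_{0_g}\colon v\mapsto 0_g\cdot v$ is the left-translation isomorphism $\ker\tilde{t}_x\to\ker\tilde{t}_g$, linear and invertible by the mirror of the previous paragraph with inverse $L_{0_{g^{-1}}}$. The analogous identity is now $\tilde{s}(0_g\cdot u_x^{-1})=\tilde{s}(u_x^{-1})=\tilde{t}(u_x)$, so $\Psi(u_x)\in\ker\tilde{s}_g\iff u_x\in\ker\tilde{t}_x$; since $\Psi(u_x)$ always lies in $\ker\tilde{t}_g$, this yields the bijection $\ker\tilde{s}_x\cap\ker\tilde{t}_x\to\ker\tilde{s}_g\cap\ker\tilde{t}_g$. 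In both bullets the quotient isomorphism is then immediate from elementary linear algebra: a linear isomorphism carrying a subspace isomorphically onto a subspace descends to an isomorphism of the corresponding quotients.

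Finally, for the \emph{in particular} clause I would note that at a point $y\in M$ the core-anchor map is $\partial_y=\tilde{t}|_{C_y}$ with $C_y=\ker\tilde{s}_y$, so $\ker\partial_y=\ker\tilde{s}_y\cap\ker\tilde{t}_y$; thus $\partial$ is injective iff $\ker\tilde{s}_y\cap\ker\tilde{t}_y=\{0\}$ for every $y$. The intersection isomorphism from the first bullet, taken over all $g$ with $t(g)=y$, gives $\ker\tilde{s}_g\cap\ker\tilde{t}_g\cong\ker\tilde{s}_y\cap\ker\tilde{t}_y$; since every $g$ has a target and every $y$ occurs as $t(1_y)$, the vanishing of $\ker\tilde{s}_g\cap\ker\tilde{t}_g$ for all $g$ is equivalent to the vanishing of $\ker\tilde{s}_y\cap\ker\tilde{t}_y$ for all $y$, i.e. to injectivity of $\partial$. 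I expect the main obstacle to be purely bookkeeping: fixing the composition convention and verifying the linearity of the translation maps honestly from the $VB$-structure on $V\times_E V$ (the fact that inserting the fixed zero $0_g$ is a linear operation), rather than any conceptual difficulty.
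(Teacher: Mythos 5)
Your proof is correct, and there is in fact nothing in the paper to compare it against: the paper quotes this lemma as a standard fact and provides no proof (the only related argument given is the ``less computational'' proof of Lemma \ref{lemma:pullback}). Your verification --- linearity of the translations from the fact that $\tilde{m}$ is a $VB$-morphism and that inserting the fixed zero $0_g$ is a linear operation, invertibility via translation by $0_{g^{-1}}$, the two identities $\tilde{t}(u_y\cdot 0_g)=\tilde{t}(u_y)$ and $\tilde{s}(0_g\cdot u_x^{-1})=\tilde{t}(u_x)$ to match the intersections, and specialization to units $g=1_y$ for the ``in particular'' clause --- is exactly the standard argument, and every step you give checks out.

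One wrinkle you should flag rather than bury in ``immediate from elementary linear algebra'': in the second bullet your isomorphism $\Psi\colon\ker\tilde{s}_x\to\ker\tilde{t}_g$ carries $\ker\tilde{s}_x\cap\ker\tilde{t}_x$ onto $\ker\tilde{s}_g\cap\ker\tilde{t}_g$, so the quotient isomorphism it literally induces is $\ker\tilde{s}_x/(\ker\tilde{s}_x\cap\ker\tilde{t}_x)\to\ker\tilde{t}_g/(\ker\tilde{s}_g\cap\ker\tilde{t}_g)$, whose target is the quotient of $\ker\tilde{t}_g$, not of $\ker\tilde{s}_g$ as printed in the statement. The printed target is abstractly isomorphic to yours --- by the first bullet it is isomorphic to $C_y/\ker\partial_y\cong\operatorname{im}\partial_y$, while yours is isomorphic to $C_x/\ker\partial_x\cong\operatorname{im}\partial_x$, and both have dimension $\operatorname{rank}(C)-\dim(\ker\tilde{s}_g\cap\ker\tilde{t}_g)$ --- but it is not the quotient your map descends to, and no map induced by $\Psi$ alone lands there. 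This is almost certainly a typo in the statement; your version is the correct one and is all that is needed for the paper's uses of the lemma (the injectivity criterion and the constancy of the rank of $\ker\tilde{s}+\ker\tilde{t}$). So state explicitly which quotient your second map induces instead of asserting the literal claim.
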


It follows from Lemma \ref{lemma:quotient} that the core-anchor map $\partial$ of a $VB$-groupoid $V$ is injective if and only if $\ker \tilde{s}\cap \ker \tilde{t}=0$, and, in this case, $\ker \tilde{s} +\ker \tilde{t} = \ker \tilde{s} \oplus \ker \tilde{t} $ is a vector subbundle of $V$. This turns out to be a Lie subalgebroid of $V$, provided that $V$ is an $\mathcal{LA}$-groupoid.
	
From now on we only consider $VB$-groupoid (resp. $\mathcal{LA}$-groupoids) $(V\rightrightarrows E; G\rightrightarrows M)$ with injective core-anchor map $\partial \colon C\to E$. In this situation, we may identify $C$ with $\operatorname{im}(\partial)$ that is a vector subbundle (resp. Lie subalgebroid) of $E$. Notice that the injectivity of the core-anchor is a Morita-invariant property, as recalled below.
\begin{rem}
	\label{rem:me_trivial_core}
	Let $F$ be a $VB$-Morita map between $VB$-groupoids $V$ and $V'$. If the core-anchor map of either $V$ or $V'$ is injective, then so is the core-anchor map of the other. This is because, by Theorem 3.5 in \cite{dHO20}, $F$ determines a pointwise quasi-isomorphism between the core complexes of $V$ and $V'$:
	\begin{equation}
		\label{eq:quis}
		\begin{tikzcd}
			0 \arrow[r] & C \arrow[r, "\partial"] \arrow[d, "F"'] & E \arrow[r] \arrow[d, "F"] & 0 \\
			0 \arrow[r] & C' \arrow[r, "\partial '"'] & E' \arrow[r] & 0
		\end{tikzcd}.
	\end{equation}

	Furthermore, any $VB$-groupoid $V$ with injective core-anchor map is Morita equivalent to one with trivial core. Indeed, if the core-anchor map $\partial \colon C\to E$ of $V$ is injective then the quotient $E/C$ supports a representation of $G$ and so can be promoted to a trivial core $VB$-groupoid $V'$. Finally, the projection $E\to E/C$ determines a $VB$-Morita map from $V$ to $V'$ (see \cite{GSM17} for additional details, where $VB$-groupoids with injective core-anchor map form a special kind of $VB$-groupoids having type $0$).
	
    Of course, if $F$ is an $\mathcal{LA}$-Morita map between $\mathcal{LA}$-groupoids $V$ and $V'$, it still holds that the injectivity of the core-anchor map in one implies the injectivity in the other. Nevertheless, unlike the case of $VB$-groupoids, an $\mathcal{LA}$-groupoid $V$ whose core-anchor map is injective is not necessarily Morita equivalent to an $\mathcal{LA}$-groupoid whose core is trivial, as $E/C$ is not a Lie algebroid in general. For instance, if $G\rightrightarrows M$ is a foliation groupoid integrating the tangent distribution $T\mathcal{F}\to M$ to a regular foliation $\mathcal{F}$ on $M$, then the core-anchor map of the tangent $VB$-groupoid $TG$ is just the inclusion $T\mathcal{F}\hookrightarrow TM$, but $\Gamma(T\mathcal{F})$ is not an ideal of $\mathfrak{X}(M)$, unless $T\mathcal{F}=TM$.
\end{rem}
	
Let $V$ be a $VB$-groupoid whose core-anchor is injective. One can consider the normal vector bundles 
\begin{equation*}
	N_1=\frac{V}{\ker(\tilde{s})+ \ker(\tilde{t})} \quad \text{and} \quad N_0=\frac{E}{C},
\end{equation*} 
over $G$ and $M$, respectively. Additionally, there are fiberwise surjective $VB$-morphisms $p^R\colon V \to s^\ast E$ and $p^L\colon V \to t^\ast E$ covering $\operatorname{id}_G$, which are respectively given by
\begin{equation}
	\label{eq:projections}
   u_g\mapsto (g, \tilde{s}(u_g)) \quad\textnormal{and}\quad u_g\mapsto (g, \tilde{t}(u_g)).
\end{equation}

The following result is a consequence of Lemma \ref{lemma:quotient}, although we provide a less computational proof.

	\begin{lemma}
		\label{lemma:pullback}
		The $VB$-morphisms $p^R$ and $p^L$ defined in Equation \eqref{eq:projections} determine $VB$-isomorphisms between $N_1$ and $s^\ast N_0$ as well as $N_1$ and $t^\ast N_0$, respectively.
	\end{lemma}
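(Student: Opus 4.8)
The plan is to show that $p^R$ and $p^L$ descend to the stated quotients and that the descended maps are fiberwise linear isomorphisms; since all the bundles involved live over $G$ and the maps cover $\operatorname{id}_G$, being fiberwise isomorphic $VB$-morphisms over the same base will immediately upgrade them to $VB$-isomorphisms. I would carry out the argument only for $p^R$, the case of $p^L$ being entirely symmetric after exchanging the roles of $\tilde{s},\tilde{t}$ and of $s,t$. The first observation is that $p^R\colon V\to s^\ast E$ is fiberwise the source $\tilde{s}_g\colon V_g\to E_{s(g)}$, hence it is surjective with kernel exactly $\ker\tilde{s}$; consequently $p^R$ identifies $V/\ker\tilde{s}$ with $s^\ast E$.

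The heart of the matter is then to understand, under this identification, the image of the subbundle $\ker\tilde{s}+\ker\tilde{t}$. Since $\ker\tilde{s}\cap\ker\tilde{t}=0$ by the injectivity of $\partial$ (the last assertion of Lemma \ref{lemma:quotient}), this reduces to computing $\tilde{s}(\ker\tilde{t})$. Using the second isomorphism in Lemma \ref{lemma:quotient}, every element of $\ker\tilde{t}_g$ has the form $0_g\cdot u_x^{-1}$ with $u_x\in\ker\tilde{s}_x=C_x$ and $x=s(g)$, so that $\tilde{s}(0_g\cdot u_x^{-1})=\tilde{s}(u_x^{-1})=\tilde{t}(u_x)=\partial(u_x)$. (Equivalently, one checks that the left-invariant sections lie in $\ker\tilde{t}$ and satisfy $\tilde{s}(\overleftarrow{c}_g)=-\partial(c)_{s(g)}$.) Therefore $\tilde{s}(\ker\tilde{t}_g)=\partial(C_{s(g)})=C_{s(g)}$, where I use the standing identification $C=\operatorname{im}(\partial)$.

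Putting these together, under $p^R$ the subbundle $(\ker\tilde{s}+\ker\tilde{t})/\ker\tilde{s}$ of $V/\ker\tilde{s}$ corresponds precisely to $s^\ast C\subseteq s^\ast E$, and passing to the further quotient yields
\[
	N_1=\frac{V}{\ker\tilde{s}+\ker\tilde{t}}\;\cong\;\frac{s^\ast E}{s^\ast C}=s^\ast\!\left(\frac{E}{C}\right)=s^\ast N_0,
\]
with the isomorphism induced by $p^R$ by construction. The same reasoning applied to $p^L$, now invoking the first isomorphism of Lemma \ref{lemma:quotient} to get $\tilde{t}(\ker\tilde{s}_g)=\partial(C_{t(g)})=C_{t(g)}$, produces the isomorphism $N_1\cong t^\ast N_0$ induced by $p^L$.

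The only genuinely nonformal step is the identification $\tilde{s}(\ker\tilde{t})=s^\ast C$ and its left-handed counterpart; everything else is bookkeeping with the two nested quotients. I expect this to be the main obstacle, as it is where the groupoid multiplication and the core-anchor $\partial$ actually intervene, and where injectivity is used twice: to guarantee that $\ker\tilde{s}+\ker\tilde{t}$ is a subbundle (so that $N_1$ is well defined) and to legitimize the identification $C=\operatorname{im}(\partial)$. I note that one could instead argue purely by dimension count, since Lemma \ref{lemma:quotient} gives $\dim\ker\tilde{s}_g=\dim\ker\tilde{t}_g=\operatorname{rk}C$ and $\ker\tilde{s}_g\cap\ker\tilde{t}_g=0$, whence $\dim N_{1,g}=\operatorname{rk}E-\operatorname{rk}C=\dim(s^\ast N_0)_g$; the quotient argument above is the promised less computational alternative.
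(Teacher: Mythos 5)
Your proof is correct and follows essentially the same route as the paper: both arguments identify $\ker p^R=\ker\tilde{s}$, compute $p^R(\ker\tilde{s}\oplus\ker\tilde{t})=\tilde{s}(\ker\tilde{t})=\partial(C)$ (the paper via the global identity $\tilde{s}\circ\tilde{i}=\tilde{t}$, you via the pointwise parametrization $0_g\cdot u_x^{-1}$ of $\ker\tilde{t}_g$ from Lemma \ref{lemma:quotient}), and then pass to the induced map on quotients to get $N_1\cong s^\ast N_0$, with the $p^L$ case handled symmetrically. The fiberwise surjectivity of $p^R$, $p^L$ that you invoke is exactly what the paper asserts in the setup preceding the lemma, so no gap there.
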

	\begin{proof}
		
		It is simple to see that the $VB$-morphisms $p^R$ and $p^L$ satisfy $\ker p^R=\ker \tilde{s}\subset \ker\tilde{s} \oplus \ker \tilde{t}$ and $\ker p^L= \ker \tilde{t}\subset \ker \tilde{s}\oplus \ker \tilde{t}$. Moreover, they also verify that
		\begin{equation*}
			p^R(\ker \tilde{s}\oplus \ker \tilde{t}) = \tilde{s}(\ker\tilde{t}) = \tilde{s} (\tilde{i}(\ker\tilde{s}))= \tilde{t}(\ker \tilde{s})= \partial(C),
		\end{equation*} 
		and
		\begin{equation*}
			p^L(\ker\tilde{s}\oplus \ker\tilde{t})= \tilde{t}(\ker\tilde{s})= \partial(C).
		\end{equation*}
	
		Consequently, $p^R$ and $p^L$ induce $VB$-isomorphisms $N_1\to s^\ast N_0$ and $N_1\to t^\ast N_0$, both covering $\operatorname{id}_G$:
		\begin{equation*}
			\begin{tikzcd}
				V \arrow[r, "p^R"] \arrow[d] & s^\ast E \arrow[d] & & V \arrow[r, "p^L"] \arrow[d] & t^\ast E \arrow[d] \\
				N_1 \arrow[r, "\bar{p^R}", dashed, '] & s^\ast N_0 && N_1 \arrow[r, "\bar{p^L}", dashed, '] & t^\ast N_0
			\end{tikzcd}.\qedhere
		\end{equation*}
	\end{proof}

We can now introduce our main concept. Let $V$ be a $VB$-groupoid whose core-anchor map is injective.

\begin{definition}
 A \emph{basic section} of $V$ is a section $\lambda\in \Gamma(N_0)$ verifying $\tilde{s}^\ast\lambda = \tilde{t}^\ast \lambda \in \Gamma(N_1)$. The vector space of all basic sections of $V$ is denoted by $\Gamma_{\operatorname{bas}}(V)$.
\end{definition}

Notice that the section $\lambda_1=\tilde{s}^\ast\lambda = \tilde{t}^\ast \lambda$ is invariant with respect to the induced inversion $\tilde{i}^\ast\colon \Gamma(N_1)\to \Gamma(N_1)$. 

Let us consider the case where $V$ is an $\mathcal{LA}$-groupoid. We want to prove that under this assumption $\Gamma_{\operatorname{bas}}(V)$  naturally inherits a Lie algebra structure. Applying the basic facts developed in Appendix \ref{appendix} to the case where $A=E$ and $B=C$, one can also consider the Bott representation $\nabla \colon \Gamma(C)\times \Gamma(N_0) \to \Gamma(N_0)$ of $C$ on $N_0$ defined by sending $(X, \overline{Y})\mapsto \overline{[X,Y]_E}$ as well as the Lie algebra of flat sections $\Gamma_0(E)$ of $E$, which is given by
		\[
			\Gamma_0(E):=\left\{\overline{Y}\in \Gamma(N_0) \, | \, \nabla_X \overline{Y}=0, \, \text{for all } X\in \Gamma(C)\right\} \cong \frac{N(\Gamma(C))}{\Gamma(C)},
		\]
		where $N(\Gamma(C))$ stands for the normalizer of $\Gamma(C)$ in the Lie algebra $\Gamma(E)$.

\begin{rem}\label{Bott_Connection_Naturality}
Since the core-anchor map is injective, the vector bundle $\ker\tilde{s}+ \ker\tilde{t}$ is a Lie subalgebroid of $V$, and we can similarly consider the Bott representation $\widetilde{\nabla}\colon \Gamma(\ker(\tilde{s})+ \ker(\tilde{t}))\times \Gamma(N_1) \to \Gamma(N_1)$ of $\ker \tilde{s} + \ker\tilde{t}$ on $N_1$. It follows that the Bott representations $\nabla$ and $\widetilde{\nabla}$ satisfy the following naturality condition. Once again, we can apply the basic facts developed in Appendix \ref{appendix} to the case where the Lie algebroids are $A_1=V$ and $A_0=E$, the fiberwise surjective Lie algebroid morphism $\Phi$ is either $\tilde{s}$ or $\tilde{t}\colon V \to E$ and the Lie subalgebroids are $B_0=C$ and necessarily $B_1=\ker\tilde{s}+ \ker\tilde{t}$. Therefore, Diagram \eqref{diag:Bott_connections} gives rise to the following two commutative diagrams:
		\begin{equation}
			\label{diag:bott}
			\begin{tikzcd}
				\ker \tilde{s} + \ker \tilde{t} \arrow[r, "\tilde{s}"] \arrow[d, "\widetilde{\nabla}"'] & C \arrow[d, "\nabla"] & \ker \tilde{s} + \ker \tilde{t} \arrow[r, "\tilde{t}"] \arrow[d, "\widetilde{\nabla}"'] & C \arrow[d, "\nabla"] \\
				DN_1 \arrow[r, "D\tilde{s}"'] & DN_0 & DN_1 \arrow[r, "D\tilde{t}"'] & DN_0
			\end{tikzcd},
		\end{equation}
		where $DN_1$ and $DN_0$ stand for the Atiyah algebroids associated with $N_1$ and $N_0$, respectively. Recall that the Atiyah algebroid associated with a vector bundle $W$ is the Lie algebroid $DW$ whose sections are derivations of $W$, the anchor is given by the corresponding symbol map and the bracket is the commutator of derivations.
	\end{rem}

The previous facts enable us to show the following key result.

	\begin{prop}
		\label{prop:subalgebra}
		Let $V$ be an $\mathcal{LA}$-groupoid whose core-anchor map is injective. Then, the vector space of basic sections $\Gamma_{\operatorname{bas}}(V)$ of $V$ is a Lie subalgebra of $\Gamma_0(E)$.
	\end{prop}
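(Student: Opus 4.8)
The plan is to realise $\Gamma_{\operatorname{bas}}(V)$ as the equalizer of two Lie algebra morphisms. Writing $\overline{\tilde{s}},\overline{\tilde{t}}\colon N_1\to N_0$ for the fiberwise isomorphisms over $s$ and $t$ produced by Lemma \ref{lemma:pullback} (so that $\tilde{s}^\ast\lambda$ is the unique section of $N_1$ with $\overline{\tilde{s}}\circ\tilde{s}^\ast\lambda=\lambda\circ s$, and likewise $\overline{\tilde{t}}\circ\tilde{t}^\ast\lambda=\lambda\circ t$), the definition reads $\Gamma_{\operatorname{bas}}(V)=\{\lambda\in\Gamma(N_0)\colon\tilde{s}^\ast\lambda=\tilde{t}^\ast\lambda\}$. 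I must establish two facts: first that every basic section is flat, i.e.\ $\Gamma_{\operatorname{bas}}(V)\subseteq\Gamma_0(E)$; and second that the pullbacks $\tilde{s}^\ast,\tilde{t}^\ast\colon\Gamma_0(E)\to\Gamma_0(V)$ on flat sections are morphisms of Lie algebras, whence the equalizer $\Gamma_{\operatorname{bas}}(V)$ is automatically closed under the bracket of $\Gamma_0(E)$. Here $\Gamma_0(V)$ denotes the Lie algebra of $\widetilde{\nabla}$-flat sections of $N_1$, obtained from Appendix \ref{appendix} with $A=V$ and $B=\ker\tilde{s}+\ker\tilde{t}$.

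For the inclusion I would play the source and target diagrams of \eqref{diag:bott} against each other using the right-invariant sections. Fix $c\in\Gamma(C)$ and a basic $\lambda$, and set $\lambda_1:=\tilde{s}^\ast\lambda=\tilde{t}^\ast\lambda$. Since $\overrightarrow{c}\in\Gamma(\ker\tilde{s})\subseteq\Gamma(\ker\tilde{s}+\ker\tilde{t})$ satisfies $\tilde{s}(\overrightarrow{c})=0$ and $\tilde{t}(\overrightarrow{c})=c$, the left-hand diagram in \eqref{diag:bott} applied to $\lambda_1=\tilde{s}^\ast\lambda$ gives $\overline{\tilde{s}}\circ\widetilde{\nabla}_{\overrightarrow{c}}\lambda_1=(\nabla_{0}\lambda)\circ s=0$, so $\widetilde{\nabla}_{\overrightarrow{c}}\lambda_1=0$ by fiberwise injectivity of $\overline{\tilde{s}}$. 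Feeding this into the right-hand diagram, now using $\lambda_1=\tilde{t}^\ast\lambda$, yields $(\nabla_{c}\lambda)\circ t=\overline{\tilde{t}}\circ\widetilde{\nabla}_{\overrightarrow{c}}\lambda_1=0$, and since $t$ is a surjective submersion we conclude $\nabla_{c}\lambda=0$. As $c$ is arbitrary, $\lambda\in\Gamma_0(E)$; it is exactly the equality $\tilde{s}^\ast\lambda=\tilde{t}^\ast\lambda$ that makes this argument run.

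Running the same computation with the left-invariant sections $\overleftarrow{c}\in\Gamma(\ker\tilde{t})$ (for which $\tilde{t}(\overleftarrow{c})=0$ and $\tilde{s}(\overleftarrow{c})=-c$) shows that $\widetilde{\nabla}$ annihilates $\lambda_1$ along $\Gamma(\ker\tilde{t})$ as well; since $\ker\tilde{s}+\ker\tilde{t}=\ker\tilde{s}\oplus\ker\tilde{t}$ is $C^\infty(G)$-spanned by right- and left-invariant sections (Lemma \ref{lemma:quotient}) and $\widetilde{\nabla}$ is $C^\infty(G)$-linear in its first argument, this gives $\lambda_1\in\Gamma_0(V)$; the same bookkeeping, using flatness of $\lambda$ to cancel the sign in $\tilde{s}(\overleftarrow{c})=-c$, shows more generally that $\tilde{s}^\ast$ and $\tilde{t}^\ast$ carry $\Gamma_0(E)$ into $\Gamma_0(V)$. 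To finish I would invoke Appendix \ref{appendix}: the source and target are fiberwise surjective Lie algebroid morphisms $V\to E$ sending $\ker\tilde{s}+\ker\tilde{t}$ onto $C$, so by the naturality underlying \eqref{diag:bott} the induced maps $\tilde{s}^\ast,\tilde{t}^\ast\colon\Gamma_0(E)\to\Gamma_0(V)$ are morphisms of Lie algebras; granting this, for basic $\lambda,\mu$ one has $\tilde{s}^\ast[\lambda,\mu]=[\tilde{s}^\ast\lambda,\tilde{s}^\ast\mu]=[\tilde{t}^\ast\lambda,\tilde{t}^\ast\mu]=\tilde{t}^\ast[\lambda,\mu]$, so $[\lambda,\mu]$ is basic. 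I expect the genuine difficulty to be this last morphism property: concretely it amounts to choosing, for flat representatives $Y,Z\in N(\Gamma(C))$, lifts $\tilde{Y},\tilde{Z}\in N(\Gamma(\ker\tilde{s}+\ker\tilde{t}))$ that are $\tilde{s}$-related (resp.\ $\tilde{t}$-related) to them, and then using that a Lie algebroid morphism sends $\tilde{s}$-related sections to $\tilde{s}$-related brackets; verifying the existence of such normalizing lifts and the relatedness of brackets over the submersions $s,t$ is precisely the content I would import from the appendix.
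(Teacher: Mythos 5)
Your proposal is correct and follows essentially the same route as the paper's proof: the identical right-invariant-section argument, playing the two naturality squares of \eqref{diag:bott} against each other (using $\tilde{s}\circ\overrightarrow{c}=0$, $\tilde{t}\circ\overrightarrow{c}=c\circ t$, fiberwise injectivity, and surjectivity of $t$) to get $\Gamma_{\operatorname{bas}}(V)\subseteq\Gamma_0(E)$, followed by the same appeal to Appendix \ref{appendix} that $\tilde{s}^\ast,\tilde{t}^\ast\colon\Gamma_0(E)\to\Gamma_0(V)$ are Lie algebra morphisms and the same equalizer computation $\tilde{s}^\ast[\lambda,\mu]=[\tilde{s}^\ast\lambda,\tilde{s}^\ast\mu]=[\tilde{t}^\ast\lambda,\tilde{t}^\ast\mu]=\tilde{t}^\ast[\lambda,\mu]$. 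Your additional verification via left-invariant sections that $\lambda_1\in\Gamma_0(V)$ is redundant (once $\lambda\in\Gamma_0(E)$, the appendix already yields $\tilde{s}^\ast\lambda\in\Gamma_0(V)$) but harmless.
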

	\begin{proof}
		Let us pick $\overline{X}\in \Gamma_{\operatorname{bas}}(V)$ and $Y\in \Gamma(C)$. The right-invariant section $\overrightarrow{Y}$ generated by $Y$ is a section of $\ker(\tilde{s})+ \ker(\tilde{t})$ such that
		\[
			\tilde{s}\circ \overrightarrow{Y} = 0^C \circ s, \quad \text{and} \quad \tilde{t} \circ \overrightarrow{Y} = t \circ Y,
		\] 
		where $0^C\in \Gamma(C)$ is the zero section and $Y\equiv\partial(Y)\in \Gamma(C)$. Thus, by Remark \ref{Bott_Connection_Naturality} it follows that
		\[
			t^\ast\left( \nabla_{Y} \overline{X}\right) = \widetilde{\nabla}_{\overrightarrow{Y}} t^\ast \overline{X} = \widetilde{\nabla}_{\overrightarrow{Y}} s^\ast \overline{X} =0.
		\]
		
		Hence, $\Gamma_{\operatorname{bas}}(V)$ is a vector subspace of $\Gamma_0(E)$. We can use the facts explained in Appendix \ref{appendix} for $A_1= V$ and $A_0=E$, the Lie algebroid morphism $\Phi$ is either $\tilde{s}$ or $\tilde{t}$, and the Lie subalgebroids are $B_0=C$ and necessarily $B_1=\ker \tilde{s}+ \ker \tilde{t}$. In consequence, one obtains that the linear maps $\tilde{s}^\ast, \tilde{t}^\ast \colon \Gamma(N_0)\to \Gamma(N_1)$, which are given by the pullbacks along the fiberwise invertible $VB$-morphisms $\tilde{s}, \tilde{t}\colon N_1\to N_0$, send $\Gamma_0(E)$ to $\Gamma_0(V)$, and, moreover, the induced maps $\tilde{s}^\ast, \tilde{t}^\ast\colon \Gamma_0(E)\to \Gamma_0(V)$ are Lie algebra morphisms. Thus, given $\overline{X}, \overline{Y}\in\Gamma_0(E)$, with $\tilde{s}^\ast\overline{X}= \tilde{t}^\ast \overline{X}$ and $\tilde{s}^\ast\overline{Y}= \tilde{t}^\ast \overline{Y}$, we can compute
		\[
			\tilde{s}^\ast([\overline{X}, \overline{Y}])= [\tilde{s}^\ast \overline{X}, \tilde{s}^\ast \overline{Y}]= [\tilde{t}^\ast \overline{X}, \tilde{t}^\ast \overline{Y}]= \tilde{t}^\ast [\overline{X}, \overline{Y}],
		\]
		showing that $\Gamma_{\operatorname{bas}}(V)\subset \Gamma_0(E)$ is a Lie subalgebra.
	\end{proof}
	
A couple of observations come in order.

	\begin{rem} \label{rem:trivial_and_iso_core}
	A particular case of our situation is provided when the core $C$ of the $\mathcal{LA}$-groupoid $V$ is zero. In this case, $E$ supports a representation of $G$, the vector bundle $N_0$ coincides with $E$ and the basic sections $\Gamma_{\operatorname{bas}}(V)$ are the $G$-invariant sections of $E$ that we denote by 
	\[
		\Gamma(E)^G:=\{e\in \Gamma(E) \, | \, g. e_{s(g)}= e_{t(g)} , \text{ for all } g\in G\}. 
	\]
	
	They actually agree with the multiplicative sections of $V$, as proven in \cite{OW19}. Besides, the space $\Gamma_{\operatorname{bas}}(V)$ of basic sections of an $\mathcal{LA}$-groupoid with injective core-anchor map coincides with the space of $G$-invariant sections of $E/C$. Indeed, a section $\overline{X}\in \Gamma(E/C)$ is basic with respect to $V$ if and only if, for any $g\in G$, we have that $\tilde{s}^{-1}_g(\overline{X}_{s(g)})$ equals $\tilde{t}^{-1}_g(\overline{X}_{t(g)})$. In other words, the latter happens if and only if $\tilde{t}(\tilde{s}_g^{-1}(\overline{X}_{s(g)}))= X_{t(g)}$, where the the left-hand side expresses the action of $g$ on $\overline{X}_{s(g)}$.

	Another extreme situation is given when the core-anchor map of $V$ is an isomorphism. Here the normal bundle $N_0$ is trivial, so there are no non-trivial basic sections on $V$. Moreover, the multiplicative sections on $V$ are codified by the trivial Lie $2$-algebra $
			\begin{tikzcd}
				\Gamma(E)\arrow[r, "\operatorname{id}"] & \Gamma(E).
			\end{tikzcd}
$ See Section 3.5 in \cite{OW19} for details.
	\end{rem}

\begin{rem}
It is well-known that $\mathcal{LA}$-groupoids can alternatively be described either as degree $1$ $Q$-groupoids (see \cite{Me09}) or as $PVB$-groupoids (see \cite{BCdH16,Ma99}). Consequently, the Lie algebra of basic sections can also be described in terms of either of these two frameworks.
\end{rem}

We want to check now that the notion of basic section that we have just introduced is Morita invariant. Let us start by considering the case where $V$ and $V'$ are just $VB$-groupoids, so that $\Gamma_{\operatorname{bas}}(V)$ and $\Gamma_{\operatorname{bas}}(V')$ are just vector spaces. If $F\colon V\to V'$ is a $VB$-Morita map, then it holds that the cochain map \eqref{eq:quis} is a pointwise quasi-isomorphism between the core-complexes of $V$ and $V'$, see Theorem 3.5 in \cite{dHO20}. On the one side, the induced map $F\colon N_0 \to N_0'$, where $ N_0'=E'/C'$, is a fiberwise invertible $VB$-morphism covering $f\colon M\to M'$. This allows us to consider the pullback of sections $F^\ast \colon \Gamma(N_0')\to \Gamma(N_0)$ through $F$. On the other side, the map $F$ given by
	\[
		N_1:=\frac{V}{\ker \tilde{s}+ \ker \tilde{t}} \to \frac{V'}{\ker \tilde{s}' + \ker \tilde{t}'}=:N_1', \quad \overline{v_g}\mapsto \overline{F(v_g)},
	\]
	is also a fiberwise invertible $VB$-morphism, covering $f\colon G\to G'$. Indeed, $F\colon N_1\to N_1'$ fits into the following commutative diagram
	\begin{equation*}
		\label{diag:VB}
		\begin{tikzcd}
			N_1\arrow[r, "F"] \arrow[d, "\tilde{s}"'] & N_1 \arrow[d, "\tilde{s'}"] \\
			N_0 \arrow[r, "F"']& N_0'
		\end{tikzcd},
	\end{equation*}
	where the other three $VB$-morphisms are fiberwise invertible. By pulling back along $F\colon N_1 \to N_1'$ we obtain the linear map $F^\ast \colon \Gamma(N_1')\to \Gamma(N_1)$. It follows that the two pullback maps $F^\ast \colon \Gamma(N_0')\to \Gamma(N_0)$ and $F^\ast \colon \Gamma(N_1')\to \Gamma(N_1)$ fit into the following commutative diagrams
	\[
	\begin{tikzcd}
		\Gamma(N_1) & \Gamma(N_0) \arrow[l, "\tilde{s}^\ast"'] & \Gamma(N_1) & \Gamma(N_0) \arrow[l, "\tilde{t}^\ast"'] \\
		\Gamma(N_1') \arrow[u, "F^\ast"] &\Gamma(N_0') \arrow[l, "\tilde{s'}^\ast"] \arrow[u, "F^\ast"'] & \Gamma(N_1') \arrow[u, "F^\ast"] &\Gamma(N_0') \arrow[l, "\tilde{t'}^\ast"] \arrow[u, "F^\ast"'] 
	\end{tikzcd}.
	\]
	
	Hence, the linear map $F^\ast\colon \Gamma(N_0')\to \Gamma(N_0)$ sends $\Gamma_{\operatorname{bas}}(V')$ into $\Gamma_{\operatorname{bas}}(V)$.

\begin{rem}
	\label{rem:Lie_morphism}
	If $V$ and $V'$ are $\mathcal{LA}$-groupoid morphisms whose core-anchor maps are injective and $F\colon V\to V'$ is an $\mathcal{LA}$-Morita map, then the linear map $F^\ast\colon \Gamma_{\operatorname{bas}}(V')\to \Gamma_{\operatorname{bas}}(V)$ is a Lie algebra morphism. Indeed, let $X', Y'\in \Gamma(E')$ such that $\overline{X'}, \overline{Y'}\in \Gamma_{\operatorname{bas}}(V')$ and $x\in M$. Pick $X,Y\in \Gamma(E)$ such that $F\circ X = X'\circ f$ and $F\circ Y= Y'\circ f$ around $x$ (compare Proposition 7.3 in \cite{OW19}). Then, we get that
	\[
		F^\ast([\overline{X'}, \overline{Y'}])_x= F_x^{-1}(\overline{[X',Y']_{f(x)}}) = \overline{[X, Y]}_x = [\overline{X}, \overline{Y}]_x= [F^\ast \overline{X'}, F^\ast \overline{Y'}]_x. \qedhere
	\]
\end{rem}
	\color{black}
	\begin{lemma}
		\label{lemma:natural_transformation}
		Let $V$ and $V'$ be two $VB$-groupoids with injective core-anchor maps and let $F,K\colon V\to V'$ be two $VB$-Morita maps covering the same map $f\colon G\to G'$. If $F$ and $K$ are related by a linear natural transformation, then the linear maps $F^\ast$ and $K^\ast\colon \Gamma_{\operatorname{bas}}(V')\to \Gamma_{\operatorname{bas}}(V)$ agree.
	\end{lemma}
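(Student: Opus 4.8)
The plan is to reduce the identity $F^\ast=K^\ast$ to a fiberwise statement over each point of $M$, and to read off the discrepancy between the induced isomorphisms $F,K\colon N_0\to N_0'$ directly from the natural transformation, recognizing it as an instance of the groupoid ``action'' on $N_0'$ that, by definition, fixes basic sections. Throughout I work only at the $VB$-level, since $\Gamma_{\operatorname{bas}}(V)$ and $\Gamma_{\operatorname{bas}}(V')$ are here just vector spaces and no Lie algebroid structure is involved.

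First I would fix the linear natural transformation $\alpha\colon E\to V'$ relating $F$ and $K$, and let $\phi\colon M\to G'$ be the map it covers, so that $\alpha$ restricts to linear maps $\alpha\colon E_x\to V'_{\phi(x)}$ with $\tilde{s}'(\alpha(e))=F(e)$ and $\tilde{t}'(\alpha(e))=K(e)$ for every $e\in E_x$. Evaluating $\tilde{s}'$ and $\tilde{t}'$ on $\alpha(e)$ and using that $F|_E,K|_E\colon E\to E'$ both cover $f|_M$, one sees that $s'(\phi(x))=t'(\phi(x))=f(x)$; that is, $\phi(x)$ lies in the isotropy group of $G'$ at $f(x)$. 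This observation — that a linear natural transformation covering $f$ is forced to take values in the isotropy — is the conceptual heart of the argument.

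Next I would pass to the normal bundles and exploit Lemma \ref{lemma:pullback}. Projecting $\alpha(e)$ to $N_1'$ and applying the fiberwise isomorphisms $\bar{p^R},\bar{p^L}$, the defining relations of $\alpha$ give
\[
\bar{p^R}_{\phi(x)}(\overline{\alpha(e)})=\overline{F(e)}=F_x(\overline{e}),\qquad \bar{p^L}_{\phi(x)}(\overline{\alpha(e)})=\overline{K(e)}=K_x(\overline{e}),
\]
where $F_x,K_x\colon (N_0)_x\to (N_0')_{f(x)}$ are the induced fiber isomorphisms and $\overline{e}\in (N_0)_x$ is the class of $e$. Since $F_x$ is surjective, eliminating $\overline{\alpha(e)}$ yields the key identity
\[
\bar{p^L}_{\phi(x)}\circ(\bar{p^R}_{\phi(x)})^{-1}=K_x\circ F_x^{-1}\colon (N_0')_{f(x)}\to (N_0')_{f(x)}.
\]

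Finally I would feed a basic section into this identity. For $\overline{\lambda}\in\Gamma_{\operatorname{bas}}(V')$ the condition $\tilde{s}'^\ast\overline{\lambda}=\tilde{t}'^\ast\overline{\lambda}$, read off on the arrow $\phi(x)$ — which is a loop at $f(x)$ by the first step — says exactly that $\bar{p^L}_{\phi(x)}\circ(\bar{p^R}_{\phi(x)})^{-1}$ fixes $\overline{\lambda}_{f(x)}$. Combining with the key identity gives $K_xF_x^{-1}(\overline{\lambda}_{f(x)})=\overline{\lambda}_{f(x)}$, hence $F_x^{-1}(\overline{\lambda}_{f(x)})=K_x^{-1}(\overline{\lambda}_{f(x)})$, i.e. $(F^\ast\overline{\lambda})_x=(K^\ast\overline{\lambda})_x$; as $x\in M$ is arbitrary, $F^\ast=K^\ast$ on $\Gamma_{\operatorname{bas}}(V')$. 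I expect the main obstacle to be the careful bookkeeping in deriving the key identity (keeping straight which class lives in which normal bundle and in which direction $\bar{p^R},\bar{p^L}$ run), together with the clean realization that only the object component of the natural transformation is needed, the naturality square playing no role in this particular statement.
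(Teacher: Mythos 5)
Your proof is correct, but it takes a genuinely different route from the paper's. The paper's proof is a two-line reduction to Theorem 3.8 of \cite{MTV24}: the linear natural transformation $\alpha\colon E\to V'$ induces a pointwise chain homotopy $h\colon E\to C'$ between the cochain maps that $F$ and $K$ determine on the core complexes, so $K|_E-F|_E=\partial'\circ h$ takes values in $\partial'(C')$, hence $F$ and $K$ induce \emph{the same} bundle map $N_0\to N_0'$ and therefore $F^\ast=K^\ast$ on all of $\Gamma(N_0')$, a fortiori on basic sections. Your argument avoids that citation entirely: you use only the object component of $\alpha$ (the conditions $\tilde{s}'\circ\alpha=F|_E$ and $\tilde{t}'\circ\alpha=K|_E$), deduce that $\alpha$ takes values in isotropy loops $\phi(x)$ at $f(x)$, identify the discrepancy $K_x\circ F_x^{-1}$ with the holonomy $\bar{p^L}_{\phi(x)}\circ(\bar{p^R}_{\phi(x)})^{-1}$ of that loop acting on $(N_0')_{f(x)}$, and observe that the basic-section condition evaluated at the loop $\phi(x)$ says precisely that this holonomy fixes $\lambda_{f(x)}$; the bijectivity of $F_x$ on normal fibers that you invoke is indeed available, since the paper has already used Theorem 3.5 of \cite{dHO20} to see that a $VB$-Morita map between $VB$-groupoids with injective core-anchor maps induces fiberwise isomorphisms $N_0\to N_0'$. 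What each approach buys: the paper's route is shorter and yields the stronger conclusion $F^\ast=K^\ast$ on all of $\Gamma(N_0')$ (equivalently, combined with your key identity, that the holonomy of each loop $\phi(x)$ is the identity on the whole fiber, not merely on values of basic sections); your route is elementary and self-contained, needs neither the naturality square nor the homotopy theorem, and makes the geometric mechanism transparent --- basic sections are exactly the holonomy-invariant ones --- at the cost of proving the equality only on $\Gamma_{\operatorname{bas}}(V')$, which is all the lemma requires.
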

	\begin{proof}
		Suppose that $\alpha\colon E\to V'$ is a linear natural transformation from $F$ to $K$. By Theorem 3.8 in \cite{MTV24} we get a pointwise homotopy 
		\[
			\begin{tikzcd}
				0 \arrow[r] & C \arrow[r, "\partial"] \arrow[d, shift left=0.5ex, "F"] \arrow[d, shift right=0.5ex, "K"'] & E \arrow[dl] \arrow[r] \arrow[d, shift left=0.5ex, "F"] \arrow[d, shift right=0.5ex, "K"'] & 0\\
				0 \arrow[r] & C' \arrow[r, "\partial'"'] & E' \arrow[r] & 0,
			\end{tikzcd}
		\]
		between the cochain maps determined by $F$ and $K$, so that the claim follows.
	\end{proof}
	
As an easy consequence of Remark \ref{rem:Lie_morphism} and Lemma \ref{lemma:natural_transformation} one deduces that:

	\begin{coroll}
		Let $V$ and $V'$ be two $\mathcal{LA}$-groupoids with injective core-anchor maps and let $F,K\colon V\to V'$ be two $\mathcal{LA}$-Morita maps covering the same map $f\colon G\to G'$. If $F$ and $K$ are related by a linear natural transformation, then the Lie algebra morphisms $F^\ast$ and $K^\ast\colon \Gamma_{\operatorname{bas}}(V')\to \Gamma_{\operatorname{bas}}(V)$ agree.
	\end{coroll}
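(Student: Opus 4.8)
The plan is to deduce the statement by simply combining the two results cited immediately before it, namely Remark \ref{rem:Lie_morphism} and Lemma \ref{lemma:natural_transformation}, once one checks that their hypotheses are simultaneously satisfied in the present setting. The whole point is that the two inputs address complementary aspects: Lemma \ref{lemma:natural_transformation} controls the equality of the pullback maps as \emph{linear} maps, while Remark \ref{rem:Lie_morphism} upgrades each pullback map to a \emph{Lie algebra} morphism.

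First I would observe that, since $F$ and $K$ are $\mathcal{LA}$-Morita maps, they are in particular $VB$-Morita maps between the underlying $VB$-groupoids of $V$ and $V'$, both of which still have injective core-anchor maps. Likewise, a linear natural transformation $\alpha\colon E\to V'$ relating $F$ and $K$ as $\mathcal{LA}$-groupoid morphisms is in particular a linear natural transformation relating them as $VB$-groupoid morphisms. Hence the hypotheses of Lemma \ref{lemma:natural_transformation} hold verbatim, and we may conclude that the two pullback maps $F^\ast$ and $K^\ast\colon \Gamma_{\operatorname{bas}}(V')\to \Gamma_{\operatorname{bas}}(V)$ coincide as linear maps.

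Next I would invoke Remark \ref{rem:Lie_morphism}: because $F$ (and similarly $K$) is an $\mathcal{LA}$-Morita map between $\mathcal{LA}$-groupoids with injective core-anchor maps, the induced map $F^\ast$ (respectively $K^\ast$) is not merely linear but is in fact a Lie algebra morphism. Therefore $F^\ast$ and $K^\ast$ are Lie algebra morphisms which, by the previous paragraph, are equal as underlying linear maps; since a Lie algebra morphism is determined by its underlying linear map, they agree as Lie algebra morphisms, which is exactly the assertion.

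I do not expect any genuine obstacle here, as all the substantive work has already been carried out in Remark \ref{rem:Lie_morphism} and Lemma \ref{lemma:natural_transformation}. The only point requiring (entirely routine) care is the compatibility between the $\mathcal{LA}$- and $VB$-layers: one must note that forgetting the Lie algebroid structures turns an $\mathcal{LA}$-Morita map into a $VB$-Morita map and an $\mathcal{LA}$-level linear natural transformation into a $VB$-level one, so that the purely $VB$-theoretic Lemma \ref{lemma:natural_transformation} applies directly to the equality of the underlying linear maps, while the $\mathcal{LA}$-structure is used only through Remark \ref{rem:Lie_morphism} to promote the conclusion to the level of Lie algebra morphisms.
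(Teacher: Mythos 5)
Your proposal is correct and matches the paper's own argument: the paper derives this corollary precisely ``as an easy consequence of Remark \ref{rem:Lie_morphism} and Lemma \ref{lemma:natural_transformation}'', i.e.\ the lemma gives equality of $F^\ast$ and $K^\ast$ as linear maps and the remark upgrades each to a Lie algebra morphism, exactly as you do. Your explicit check that forgetting the Lie algebroid structures turns the $\mathcal{LA}$-level hypotheses into the $VB$-level ones required by the lemma is the only (routine) content of the deduction, and you handle it correctly.
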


More importantly, in view of Remark \ref{rem:me_trivial_core}, we obtain the following result.

	\begin{prop}
		\label{prop:morita_invariance}
		Let $(F,f)\colon (V\rightrightarrows E; G\rightrightarrows M)\to (V'\rightrightarrows E'; G'\rightrightarrows M')$ be a $VB$-Morita map between two $VB$-groupoids, one of which has injective core-anchor map. Then, the pullback map $F^\ast \colon\Gamma_{\operatorname{bas}}(V') \to \Gamma_{\operatorname{bas}}(V)$ is a linear isomorphism.
	\end{prop}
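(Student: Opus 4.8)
The plan is to reduce the statement to the Morita invariance of the space of invariant sections of a representation, and then to prove that invariance directly from the defining properties of a Morita map. Since we have already verified that $F^\ast$ carries $\Gamma_{\operatorname{bas}}(V')$ into $\Gamma_{\operatorname{bas}}(V)$ as a well-defined linear map, only bijectivity remains. First I would invoke Remark \ref{rem:me_trivial_core} to guarantee that both $V$ and $V'$ have injective core-anchor map, so that $\Gamma_{\operatorname{bas}}(V)$ and $\Gamma_{\operatorname{bas}}(V')$ are genuinely defined. By Remark \ref{rem:trivial_and_iso_core}, these spaces are identified with the $G$-invariant sections $\Gamma(N_0)^G$ and the $G'$-invariant sections $\Gamma(N_0')^{G'}$ of the representations $N_0=E/C$ and $N_0'=E'/C'$. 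The commutative squares preceding this statement say precisely that the fiberwise invertible $VB$-morphism $F\colon N_0\to N_0'$ intertwines these two representations along $f$, and that under the above identifications $F^\ast$ is just the pullback of invariant sections along $F$. Thus it suffices to show that this pullback is an isomorphism, which is exactly the trivial-core case of the proposition.

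Next I would exploit the two defining features of the Morita map $F$: essential surjectivity of its base map $f\colon M\to M'$ (every $G'$-orbit meets $f(M)$, via a surjective submersion) together with full faithfulness. For injectivity, if $F^\ast\lambda'=0$ then $\lambda'$ vanishes on $f(M)$, and $G'$-invariance combined with essential surjectivity forces $\lambda'$ to vanish on every orbit, hence identically. For surjectivity, given $\lambda\in\Gamma(N_0)^G$ I would build a candidate $\lambda'\in\Gamma(N_0')^{G'}$ by transporting $\lambda$: for $y'\in M'$ choose $x\in M$ and an arrow $g'\colon f(x)\to y'$ in $G'$, and set $\lambda'_{y'}:=g'\cdot F_x(\lambda_x)$. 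Taking $y'=f(x)$ and $g'=\operatorname{id}_{f(x)}$ gives $F^\ast\lambda'=\lambda$, and $\lambda'$ is $G'$-invariant by construction.

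The main obstacle is to show that $\lambda'$ is well defined and smooth. Well-definedness is exactly where full faithfulness of $F$ enters: two choices $(x_1,g_1')$ and $(x_2,g_2')$ produce an arrow $g_2'^{-1}g_1'\colon f(x_1)\to f(x_2)$, which full faithfulness identifies with $f(g)$ for a unique $g\colon x_1\to x_2$ in $G$; the $G$-invariance of $\lambda$ and the equivariance of $F$ then make the two prescriptions coincide. Smoothness follows by running this construction over local sections of the surjective submersion encoding essential surjectivity, so that locally $\lambda'$ has the form $y'\mapsto g'(y')\cdot F_{x(y')}(\lambda_{x(y')})$ with $x(\cdot)$ and $g'(\cdot)$ smooth. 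This settles the trivial-core case and, via the reduction above, the proposition. Alternatively, one may recast this bijectivity as the statement that a $VB$-Morita map induces an isomorphism on the degree-zero cohomology of the associated representations, i.e.\ on invariant sections (compare \cite{dHO20,OW19}).
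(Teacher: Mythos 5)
Your proof is correct, but it takes a genuinely different route from the paper's. The paper argues with the $2$-categorical machinery of \cite{dHO20}: for a $VB$-Morita map covering $\operatorname{id}_G$ it invokes Proposition 6.2 therein to produce a quasi-inverse $K\colon V'\to V$ together with linear natural transformations $K\circ F\Rightarrow \operatorname{id}_V$ and $F\circ K\Rightarrow \operatorname{id}_{V'}$, so that Lemma \ref{lemma:natural_transformation} makes $F^\ast$ and $K^\ast$ mutually inverse; the case of a general base map $f$ is then reduced to the identity-covering case by factoring $F$ through the pullback $VB$-groupoid $f^\ast V'$ and applying Theorem 3.5 of \cite{dHO20}. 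You instead identify $\Gamma_{\operatorname{bas}}(V)$ and $\Gamma_{\operatorname{bas}}(V')$ with the invariant sections of the normal representations $N_0=E/C$ and $N_0'=E'/C'$ (note that Remark \ref{rem:trivial_and_iso_core} is phrased for $\mathcal{LA}$-groupoids, but its argument is purely $VB$-theoretic, so this is legitimate) and then run the classical descent argument for representations of Morita equivalent groupoids: injectivity from essential surjectivity, surjectivity by transporting a section along arrows, with full faithfulness giving well-definedness and local sections of the submersion $G'\times_{s',f}M\to M'$ giving smoothness. Your approach is more elementary and makes the underlying mechanism transparent (Morita invariance of invariant sections, i.e.\ of degree-zero differentiable cohomology with coefficients), avoiding both the quasi-inverse and the pullback-groupoid detour; the paper's approach avoids all pointwise constructions and smoothness checks, and reuses Lemma \ref{lemma:natural_transformation}, which it needs anyway for the subsequent Corollary.

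One point you should patch: essential surjectivity and full faithfulness of the base map $f\colon G\to G'$ are \emph{not} defining features of the $VB$-Morita map $F$ --- the definition only asks that $F\colon V\to V'$ be a Morita map of the total Lie groupoids $V\rightrightarrows E$ and $V'\rightrightarrows E'$. That $f$ is itself a Morita map is a theorem; it is part of the characterization of $VB$-Morita maps in Theorem 3.5 of \cite{dHO20} (and can also be deduced by restricting the essential surjectivity and fullness conditions for $F$ to zero sections). As written, your argument silently conflates the two; with the correct citation inserted at that step, it is complete.
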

	\begin{proof}

	 We consider first the case when  $f=\operatorname{id}_G$. In this scenario, by Proposition 6.2 in \cite{dHO20}, there exists a $VB$-groupoid morphism $K\colon V'\to V$ covering $\operatorname{id}_G$ and linear natural transformations $K\circ F \Rightarrow \operatorname{id}_V$ and $F\circ K \Rightarrow \operatorname{id}_{V'}$. By Lemma \ref{lemma:natural_transformation} we obtain that $(K\circ F)^\ast= F^\ast\circ K^\ast$ agrees with the identity $\operatorname{id}_{\Gamma_{\operatorname{bas}}(V)}$ and that $(F\circ K)^\ast= K^\ast\circ F^\ast$ agrees with the identity $\operatorname{id}_{\Gamma_{\operatorname{bas}}(V')}$, so that $F^\ast$ is an isomorphism.
		
	 If we consider the situation for general $f\colon G\to G'$, then we may look at the pullback $VB$-groupoid $f^\ast V'$ over $G$ which is given by $G\times_{G'} V' \rightrightarrows M \times_{M'} E'$. The core complex of $f^\ast V'$ is
		\[
			\begin{tikzcd}
				0 \arrow[r] & f^\ast C' \arrow[r] & f^\ast E' \arrow[r] & 0,
			\end{tikzcd}
		\]
		where the map $f^\ast C'\to f^\ast E'$ is essentially given by $\partial'$ and so it is injective. In particular:
		\[
			N:=f^\ast E'/f^\ast C' \cong f^\ast\left(E'/C'\right) = f^\ast N_0'.
		\]
		
		This implies that $\Gamma(N)= \Gamma(N_0)$ and therefore $\Gamma_{\operatorname{bas}}(f^\ast V') = \Gamma_{\operatorname{bas}}(V')$. The $VB$-groupoid $f^\ast V'$ comes with a $VB$-groupoid morphism $\mathsf{F}\colon V \to f^\ast V'$ covering $\operatorname{id}_G$ that is defined by the expression
		\[
			\mathsf{F}(v) = (g, F(v))\in G\times_{G'} V',\quad v\in V_g.
		\] 
		
		Hence, by Theorem 3.5 in \cite{dHO20} it holds that $\mathsf{F}$ is a $VB$-Morita map covering the identity, thus obtaining that $\mathsf{F}^\ast \colon \Gamma_{\operatorname{bas}}(f^\ast V')\to \Gamma_{\operatorname{bas}}(V)$ is an isomorphism. However, because of the equivalence $\Gamma_{\operatorname{bas}}(f^\ast V')= \Gamma_{\operatorname{bas}}(V')$ we conclude that the maps $\mathsf{F}^\ast$ and $F^\ast$ coincide and so the result follows, as desired.
	\end{proof}

It is clear that if $(F,f)\colon (V\rightrightarrows E; G\rightrightarrows M)\to (V'\rightrightarrows E'; G'\rightrightarrows M')$ is instead an $\mathcal{LA}$-Morita map between two $\mathcal{LA}$-groupoids, one of which has injective core-anchor map, then $F^\ast \colon\Gamma_{\operatorname{bas}}(V') \to \Gamma_{\operatorname{bas}}(V)$ is a Lie algebra isomorphism. Even more, we can now easily deduce the following result.

\begin{coroll}
If $V$ and $V'$ are two Morita equivalent $\mathcal{LA}$-groupoids, then $\Gamma_{\operatorname{bas}}(V)\cong \Gamma_{\operatorname{bas}}(V')$ as Lie algebras.
\end{coroll}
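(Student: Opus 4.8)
The plan is to deduce the statement formally from the single-arrow Morita invariance already established, using the span definition of Morita equivalence. By hypothesis $V$ and $V'$ are Morita equivalent $\mathcal{LA}$-groupoids, so by definition there exist a third $\mathcal{LA}$-groupoid $W$ together with two $\mathcal{LA}$-Morita maps $F\colon W\to V$ and $F'\colon W\to V'$. The whole proof will then consist of transporting basic sections back and forth along this span.

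First I would confirm that the standing injectivity hypothesis of this section propagates to the intermediate object $W$. Both $V$ and $V'$ have injective core-anchor map, and since injectivity of the core-anchor is Morita invariant by Remark \ref{rem:me_trivial_core}, the core-anchor map of $W$ is injective as well. Hence all three $\mathcal{LA}$-groupoids lie within the scope of Proposition \ref{prop:morita_invariance} and its $\mathcal{LA}$-refinement. I would then invoke that refinement, recalling that for an $\mathcal{LA}$-Morita map the induced pullback on basic sections is moreover a Lie algebra morphism (Remark \ref{rem:Lie_morphism}). Applying it to $F$ and to $F'$ yields two Lie algebra isomorphisms
\[
	F^\ast\colon \Gamma_{\operatorname{bas}}(V)\xrightarrow{\ \cong\ }\Gamma_{\operatorname{bas}}(W),\qquad (F')^\ast\colon \Gamma_{\operatorname{bas}}(V')\xrightarrow{\ \cong\ }\Gamma_{\operatorname{bas}}(W).
\]
The composite $\bigl((F')^\ast\bigr)^{-1}\circ F^\ast\colon \Gamma_{\operatorname{bas}}(V)\to \Gamma_{\operatorname{bas}}(V')$ is then an isomorphism of Lie algebras, which is exactly the claim.

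There is no genuine obstacle here: the corollary is a purely formal consequence of the Morita invariance for a single $\mathcal{LA}$-Morita map together with the span definition of Morita equivalence. The only point requiring (minimal) care is the verification that the apex $W$ of the span satisfies the injective core-anchor hypothesis, which is precisely what the Morita invariance of injectivity recorded in Remark \ref{rem:me_trivial_core} guarantees.
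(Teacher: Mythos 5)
Your proof is correct and follows exactly the route the paper intends: using the span definition of Morita equivalence, propagating injectivity of the core-anchor to the apex $W$ via Remark \ref{rem:me_trivial_core}, and composing the two Lie algebra isomorphisms furnished by Proposition \ref{prop:morita_invariance} together with Remark \ref{rem:Lie_morphism}. Your explicit check that $W$ satisfies the injectivity hypothesis is a point the paper leaves implicit, but otherwise the arguments coincide.
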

	
In consequence, the Lie algebra of basic sections $\Gamma_{\operatorname{bas}}(V)$ gives rise to a model for the sections of the stacky Lie algebroid presented by the $\mathcal{LA}$-groupoid $V$, provided that its core-anchor map is injective.

As alluded to previously, the Lie 2-algebra of multiplicative sections of an $\mathcal{LA}$-groupoid also provides a model for the sections of the corresponding stacky Lie algebroid \cite{OW19}, and these two models turns out to be equivalent, as the following result shows.

	\begin{prop}
		Let $V$ be an $\mathcal{LA}$-groupoid whose core-anchor map is injective. Then, the  map 
		\[
			\Psi\colon \frac{\Gamma_{\operatorname{mult}}(V)}{\textnormal{im}(\delta)} \to \Gamma_{\operatorname{bas}}(V), \quad \overline{(\lambda, e)}\mapsto \overline{e}
		\]
		is a Lie algebra isomorphism.
	\end{prop}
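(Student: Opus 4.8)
The plan is to check, in order, that $\Psi$ is well defined, that it is a homomorphism of Lie algebras, that it is injective, and that it is surjective; the genuine work lies in the last step. To see that $\Psi$ is well defined, I would start from a multiplicative section $(\lambda,e)$: the condition that $\lambda\colon G\to V$ is a groupoid morphism covering $e\colon M\to E$ reads $\tilde s\circ\lambda=e\circ s$ and $\tilde t\circ\lambda=e\circ t$. Since $\tilde s$ and $\tilde t$ descend to the fiberwise invertible maps $\tilde s,\tilde t\colon N_1\to N_0$ of Lemma \ref{lemma:pullback}, projecting $\lambda$ to $\overline\lambda\in\Gamma(N_1)$ gives $\tilde s_g(\overline\lambda_g)=\overline e_{s(g)}$ and $\tilde t_g(\overline\lambda_g)=\overline e_{t(g)}$, and inverting yields $\tilde s_g^{-1}(\overline e_{s(g)})=\overline\lambda_g=\tilde t_g^{-1}(\overline e_{t(g)})$. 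This is exactly the criterion of Remark \ref{rem:trivial_and_iso_core} for $\overline e\in\Gamma_{\operatorname{bas}}(V)$, with common value $\tilde s^\ast\overline e=\tilde t^\ast\overline e=\overline\lambda$. Moreover $\delta(c)=(\overrightarrow c-\overleftarrow c,\partial(c))$ and $\overline{\partial(c)}=0$ in $N_0=E/C$, so $\Psi$ indeed descends to the quotient by $\operatorname{im}(\delta)$.

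Next I would verify the homomorphism property. The bracket on $\Gamma_{\operatorname{mult}}(V)$ is $[(\lambda,e),(\lambda',e')]=([\lambda,\lambda']_V,[e,e']_E)$, and $\operatorname{im}(\delta)$ is an ideal (being the image of the top map of the crossed module), so the quotient bracket is represented by $([\lambda,\lambda']_V,[e,e']_E)$. On the other side, Proposition \ref{prop:subalgebra} realizes $\Gamma_{\operatorname{bas}}(V)$ as a Lie subalgebra of $\Gamma_0(E)\cong N(\Gamma(C))/\Gamma(C)$, whose bracket is the one induced from $\Gamma(E)$, i.e. $[\overline e,\overline{e'}]=\overline{[e,e']_E}$. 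Hence $\Psi(\overline{[(\lambda,e),(\lambda',e')]})=\overline{[e,e']_E}=[\overline e,\overline{e'}]=[\Psi\overline{(\lambda,e)},\Psi\overline{(\lambda',e')}]$, so $\Psi$ preserves brackets.

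For injectivity, suppose $\Psi(\overline{(\lambda,e)})=\overline e=0$, i.e. $e$ takes values in $C=\operatorname{im}(\partial)$; since $\partial$ is injective I may write $e=\partial(c)$ with $c=\partial^{-1}(e)\in\Gamma(C)$. Because $\delta(c)=(\overrightarrow c-\overleftarrow c,\partial(c))$ is multiplicative, the difference $\mu:=\lambda-(\overrightarrow c-\overleftarrow c)$ is a multiplicative section covering $e-\partial(c)=0$. As a groupoid morphism covering the zero section it satisfies $\tilde s\circ\mu=0$ and $\tilde t\circ\mu=0$, whence $\mu_g\in\ker\tilde s_g\cap\ker\tilde t_g=\{0_g\}$ for all $g$ by Lemma \ref{lemma:quotient}. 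Therefore $\mu=0$, so $\lambda=\overrightarrow c-\overleftarrow c$ and $(\lambda,e)=\delta(c)\in\operatorname{im}(\delta)$, giving $\ker\Psi=0$.

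Finally, surjectivity is the main obstacle, and I would handle it by a direct construction. Given $\overline X\in\Gamma_{\operatorname{bas}}(V)$ with representative $X\in\Gamma(E)$, consider the $VB$-morphism $(p^R,p^L)=(\tilde s,\tilde t)\colon V\to s^\ast E\oplus t^\ast E$ over $G$; it is fiberwise injective precisely because $\ker\tilde s_g\cap\ker\tilde t_g=\{0_g\}$, hence of constant rank, so its image is a vector subbundle $W\subset s^\ast E\oplus t^\ast E$. Using $\tilde t(\ker\tilde s)=\partial(C)=\tilde s(\ker\tilde t)$ from the proof of Lemma \ref{lemma:pullback}, one checks that $(\xi,\eta)\in W_g$ if and only if $\overline\eta=\tilde t_g\tilde s_g^{-1}(\overline\xi)$ in $N_0$; applied to $g\mapsto(X_{s(g)},X_{t(g)})=(s^\ast X,t^\ast X)$ this is exactly the basic condition, so $(s^\ast X,t^\ast X)$ is a smooth section of $W$. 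I then set $\lambda:=(\tilde s,\tilde t)^{-1}\circ(s^\ast X,t^\ast X)\in\Gamma(V)$, which is smooth because $(\tilde s,\tilde t)\colon V\to W$ is a $VB$-isomorphism; by construction $\lambda_g$ is the unique element of $V_g$ with source $X_{s(g)}$ and target $X_{t(g)}$, uniqueness again coming from $\ker\tilde s_g\cap\ker\tilde t_g=\{0_g\}$. Multiplicativity of $\lambda$ then follows from uniqueness alone: for composable $(g,h)$ the product $\lambda_g\cdot\lambda_h$ lies in $V_{gh}$ with source $X_{s(gh)}$ and target $X_{t(gh)}$, hence equals $\lambda_{gh}$, and both $\lambda_{1_x}$ and $\tilde u(X_x)$ have source and target $X_x$, hence coincide. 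Thus $(\lambda,X)$ is a multiplicative section with $\Psi(\overline{(\lambda,X)})=\overline X$. The crux here is packaging the pointwise, uniquely-determined lift $\lambda$ into a smooth groupoid morphism; the constant-rank property of $(\tilde s,\tilde t)$ supplies smoothness, and the injectivity of the core-anchor supplies the uniqueness that forces multiplicativity. Being a bijective Lie algebra homomorphism, $\Psi$ is then a Lie algebra isomorphism.
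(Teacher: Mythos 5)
Your proof is correct, but it follows a genuinely different route from the paper's. The paper argues indirectly, by identifying both sides of $\Psi$ with the space $\Gamma(E/C)^G$ of invariant sections: on the basic side this is Remark \ref{rem:trivial_and_iso_core}, while on the multiplicative side it invokes the Morita equivalence of $V$ with the trivial-core $VB$-groupoid over $E/C$ (Remark \ref{rem:me_trivial_core}) together with Remark 3.2 and Corollary 7.2 of \cite{OW19}, i.e.\ the Morita invariance of the crossed module of multiplicative sections; well-definedness, injectivity and surjectivity of $\Psi$ are never checked by hand, they come for free from these identifications. You instead verify everything directly: well-definedness by projecting the covering conditions $\tilde s\circ\lambda=e\circ s$, $\tilde t\circ\lambda=e\circ t$ to $N_1$; injectivity from the injectivity of $\partial$ together with $\ker\tilde s_g\cap\ker\tilde t_g=\{0_g\}$ (Lemma \ref{lemma:quotient}), after observing that a multiplicative section covering the zero section must vanish; and, most substantially, surjectivity by an explicit construction of a multiplicative lift, using that $(\tilde s,\tilde t)\colon V\to s^\ast E\oplus t^\ast E$ is fiberwise injective of constant rank, that its image $W$ is cut out exactly by the basic condition (your dimension count and the identities $\tilde t(\ker\tilde s)=\partial(C)=\tilde s(\ker\tilde t)$ from Lemma \ref{lemma:pullback} make this precise), and that uniqueness of elements of $V_g$ with prescribed source and target forces multiplicativity of the lift. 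The trade-off is clear: the paper's proof is shorter and stays within the Morita-invariance philosophy of the whole article, but it is not self-contained, leaning on external results from \cite{OW19} and \cite{dHO20}; yours is elementary and constructive, produces a canonical multiplicative section lifting any given basic section (hence an explicit inverse, indeed a splitting of $\Psi$ at the level of $\Gamma_{\operatorname{mult}}(V)$), and uses only the lemmas already proved in the paper. Both proofs are complete; yours could even serve as a replacement if one wanted the result independent of the quoted Morita-invariance machinery.
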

	\begin{proof}
	From the definition of the Lie algebra structures on $\Gamma_{\operatorname{mult}}(V)/\textnormal{im}(\delta)$ and $\Gamma_{\operatorname{bas}}(V)$, it easily follows that $\Psi$ is a Lie algebra morphism. On the one hand, by Remark \ref{rem:trivial_and_iso_core}, we know that $\Gamma_{\operatorname{bas}}(V)$ agrees with $\Gamma(E/C)^G$, as vector spaces. The isomorphism is just the identity of $\Gamma(E/C)$, noting that a section $\overline{X}\in \Gamma(E/C)$ is basic with respect to $V$ if and only if $\overline{X}$ is invariant with respect to the representation of $G$ described by the $VB$-groupoid $V'$. On the other hand, by Remark \ref{rem:me_trivial_core}, $V$ is Morita equivalent (at least as a $VB$-groupoid) to the trivial core $VB$-groupoid $V'$ with side bundle $E/C$. But, Remark 3.2 in \cite{OW19} says that the vector space of multiplicative sections of $V'$ is also given by $\Gamma(E/C)^G$. Therefore, the quotient vector space $\Gamma_{\operatorname{mult}}(V)/\textnormal{im}(\delta)$ is isomorphic to $\Gamma(E/C)^G$, compare Corollary 7.2 in \cite{OW19}, and the isomorphism is given exactly by $\Psi$. This completes the proof.
	\end{proof}
	
	\section{Examples}\label{sec:4}
	In this final section we exhibit some interesting examples where our results apply.
	
	\begin{example}[Basic vector fields]
		A \emph{foliation groupoid} is a Lie groupoid $G\rightrightarrows M$ whose anchor map $\rho\colon A \to TM$ is injective. They naturally show up as integrations of regular foliations. The tangent Lie groupoid $TG\rightrightarrows TM$ is an $\mathcal{LA}$-groupoid over $G\rightrightarrows M$ whose core-anchor map agrees with $\rho$ and so it is injective. Considering basic sections of the tangent $\mathcal{LA}$-groupoid one recovers the notion of \emph{basic vector field} on foliation groupoids. These were studied and exploited in the work \cite{HS21} which turned out to be a source of inspiration for our generalization. First, any Lie groupoid admitting a $0$-shifted symplectic structure is a foliation groupoid. In particular, basic vector fields are used in \cite{HS21} to get a Morita invariant reduction of Lie groupoids equipped with $0$-shifted symplectic structures under Hamiltonian actions of foliation Lie $2$-groups. Second, if we endow our foliation groupoid with a Riemannian 2-metric \cite{dHF18} then the idea of basic Killing vector field gives rise to a notion of Killing vector field over the corresponding Riemannian stack \cite{HCV23}. 
	\end{example}
	
	\begin{example}[Basic derivations] \label{ex:basic_derivation}
		Our initial motivation for this work was trying to define what would be a basic derivation of a line bundle groupoid (line bundle internal to the category of Lie groupoids), an object that is closely related to the notion of $0$-shifted contact structure recently introduced in \cite{MTV24}. If $E$ is a representation of a Lie groupoid $G\rightrightarrows M$, then the action groupoid $G\ltimes E\rightrightarrows E$ is a $VB$-groupoid with trivial core over $M$. We denote such a $VB$-groupoid by $V\rightrightarrows E$. Consider the Atiyah algebroids $DV\to G$ and $DE\to M$ of $V$ and $E$, respectively. Since the core of $V$ is trivial, we can apply the Atiyah functor $D$ to the structure maps of $V$, thus obtaining the \emph{Atiyah $\mathcal{LA}$-groupoid} $(DV\rightrightarrows DE; G\rightrightarrows M)$ (compare \cite{ETV19}). When the core-anchor map of $DV$ is injective we get a model for the \emph{basic derivations} of $V$. A particular case is given when $E=L_M$ is a line bundle over $M$, and so $V=L$ is a line bundle over $G$. More importantly, a line bundle groupoid $(L\rightrightarrows L_M; G\rightrightarrows M)$ admitting a $0$-shifted contact structure forces its corresponding Atiyah $\mathcal{LA}$-groupoid $DL\rightrightarrows DL_M$ to have injective core-anchor map \cite{MTV24}. Therefore, we end up with a notion of \emph{basic derivation} of the line bundle groupoid $L$ which is expected to allow us to obtain a reduction procedure for line bundle groupoids equipped with $0$-shifted contact structures invariant under the action of foliation Lie $2$-groups, see \cite{MV}.
	\end{example}
	
	\begin{example}[Basic $1$-forms on transitive Poisson groupoids]
		\label{ex:Poisson_groupoid}
		A \emph{Poisson groupoid} is a Lie groupoid $G\rightrightarrows M$ endowed with a suitably compatible Poisson structure on $G$. In other words, it supports a Poisson bivector $\pi$ on $G$ such that $\pi^\sharp\colon T^\ast G \to TG$ is a $VB$-groupoid morphism from the cotangent $VB$-groupoid to the tangent $VB$-groupoid. Poisson groupoids appear naturally as the global counterparts of \emph{Lie bialgebroids}, see \cite{MX00}. It is known that if $G\rightrightarrows M$ is a Poisson groupoid, then the dual $A^\ast$ can be equipped with a Lie algebroid structure, where $A$ is the Lie algebroid of $G$. Actually, the cotangent $VB$-groupoid $T^\ast G\rightrightarrows A^\ast$ is an $\mathcal{LA}$-groupoid whose core-anchor map is $\rho^\ast\colon T^\ast M \to A^\ast$, the dual of the anchor $\rho$ of $A$. If $G$ is transitive then $\rho$ is surjective, so that $T^\ast G$ turns into an $\mathcal{LA}$-groupoid whose core-anchor map is injective. Transitive Poisson groupoids and transitive Lie bialgebroids were already studied in \cite{CL05}. Hence, we get a model for the \emph{basic 1-forms} on $G$, provided that it is a transitive Poisson groupoid.
	\end{example}
	
	\begin{example}[Basic $1$-jets on Jacobi groupoids]
		Similarly to the previous example, we can consider the situation where we have a Jacobi groupoid. A \emph{Jacobi groupoid} is a Lie groupoid $G\rightrightarrows M$ endowed with a suitably compatible Jacobi structure on $G$. These turn out to be the global counterparts of generalized Lie bialgebroids, consult \cite{Da16,IM03}. Adopting the line bundle point of view (see, e.g., \cite{VW20}), a Jacobi groupoid is a $VB$-groupoid $(L\rightrightarrows L_M; G\rightrightarrows M)$, where $L\to G$ and $L_M\to M$ are line bundles equipped with a Jacobi bivector $\mathsf{J}\colon \wedge^2 J^1L \to L$ that is compatible with the $VB$-groupoid structures involved. Here $J^1L$ stands for the first jet bundle of $L$ and the compatibility condition means that the sharp map $\mathsf{J}^\sharp\colon J^1L\to DL$ is a $VB$-groupoid morphism. Note that $DL\rightrightarrows DL_M$ is the Atiyah $\mathcal{LA}$-groupoid over $G\rightrightarrows M$ introduced in Example \ref{ex:basic_derivation} and $J^1L \rightrightarrows A^\dagger =A^\ast\otimes L_M$ is the $VB$-groupoid over $G\rightrightarrows M$ obtained as the dual of $DL\rightrightarrows DL_M$ tensor $L\rightrightarrows L_M$ (see Example 3.1 in \cite{MTV24} or Remark 2.1.36 in \cite{Ma25}). It is worth stressing that the Lie algebroid $A$ of $G$ is identified with the core of $DL$ and that we are using the canonical isomorphism $J^1L \cong DL^\ast \otimes L$. Furthermore, this definition is slightly different from the one that appears in \cite{IM03}, but, in the case when $L=\mathbbm{R}_G$ and $L_M=\mathbbm{R}_M$ are the trivial line bundles, the two definitions coincide. 
		
		The Jacobi structure on $G$ determines a Lie algebroid structure on $A^\dagger$, just as it occurs for the case of Poisson groupoids (see Section 5.2 in \cite{IM03} for the trivial line bundle case). Actually, $J^1L\rightrightarrows A^\dagger$ is an $\mathcal{LA}$-groupoid over $G$. Its core-anchor map $\mathcal{D}^\dagger\colon J^1L_M\to A^\dagger$ is given by the twisted dual of the core-anchor map $\mathcal{D}\colon A\to DL_M$ of $DL$. Hence, if $\mathcal{D}$ is surjective, then $J^1L$ is an $\mathcal{LA}$-groupoid with injective core-anchor map, thus obtaining a model of \emph{basic $1$-jets} on $L$.
	\end{example}
	
	\begin{rem}
		Particular cases of the last two examples are symplectic and contact groupoids (see, e.g., Section 3.2 in \cite{BSTV20} for a definition of contact groupoids that involves line bundles). In both cases we get that, if the core-anchor map of $T^\ast G$ (resp. $J^1L$) is injective, then it is also surjective and there are no non-trivial basic $1$-forms (resp. non-trivial basic $1$-jets) neither non-trivial basic vector fields (resp. non-trivial basic derivations). Indeed, a compatible symplectic structure on a Lie groupoid $G\rightrightarrows M$ is given by a compatible Poisson structure $\pi$ such that the $VB$-groupoid morphism $\pi^\sharp\colon T^\ast G\to TG$ is invertible. Then, $\pi^\sharp$ determines the following isomorphism
		\[
			\begin{tikzcd}
				0 \arrow[r] & T^\ast M \arrow[r, "\rho^\ast"] \arrow[d, "\pi^\sharp"'] & A^\ast \arrow[d, "\pi^\sharp"] \arrow[r] & 0\\
				0 \arrow[r] & A \arrow[r, "\rho"'] & TM \arrow[r] & 0,
			\end{tikzcd}
		\]
		between the core complexes of $T^\ast G$ and $TG$. Analogously, a compatible contact structure is given by a $VB$-groupoid $L\rightrightarrows L_M$ over $G$, where $L$ and $L_M$ are line bundles, together with a compatible Jacobi structure $\mathsf{J}$ such that $\mathsf{J}^\sharp\colon J^1L\to DL$ is invertible. Once again, $\mathsf{J}^\sharp$ determines an isomorphism
		\[
		\begin{tikzcd}
			0 \arrow[r] & J^1L_M \arrow[r, "\mathcal{D}^\dagger"] \arrow[d, "\mathsf{J}^\sharp"'] & A^\dagger \arrow[d, "\mathsf{J}^\sharp"] \arrow[r] & 0\\
			0 \arrow[r] & A \arrow[r, "\mathcal{D}"'] & DL_M \arrow[r] & 0
		\end{tikzcd}
		\]
		between the core complexes of $J^1L$ and $DL$. It is clear that, if $\rho$ (resp. $\mathcal{D}$) is surjective, then it is also injective. Hence $\rho$ and $\rho^\ast$ (resp. $\mathcal{D}$ and $\mathcal{D}^\dagger$) are isomorphisms. In other words, both $TG$ and $T^\ast G$ (resp. $DL$ and $J^1L$) are $\mathcal{LA}$-groupoids with an isomorphism as core-anchor map, so that the claim follows from Remark \ref{rem:trivial_and_iso_core}.
	\end{rem}

We finish this work by pointing out that there are other interesting examples of $\mathcal{LA}$-groupoids with injective core-anchor map associated to multiplicative geometric structures on Lie groupoids, meaning that we can also get a notion of basic section for those structures. For instance, \emph{multiplicative Dirac structure} give rise to examples of $\mathcal{LA}$-groupoids \cite{Or13}. Similarly, additional examples can be obtained by introducing a notion of \emph{multiplicative Dirac-Jacobi structure} after imposing some multiplicativity condition on the notion of Dirac-Jacobi structure studied in \cite{V18}.

\appendix

\section{}

\label{appendix}

The aim of this short appendix is to collect some basic facts about Lie algebroids and their morphisms which are used throughout this work. Let $A$ be a Lie algebroid over $M$ and let $B$ be a wide Lie subalgebroid of $A$. One can consider the normal bundle $A/B$ and the Bott representation $\nabla \colon \Gamma(B)\times \Gamma(A/B) \to \Gamma(A/B)$ of $B$ on $A/B$, defined by sending $(X, \overline{Y})\mapsto \overline{[X,Y]_A}$. The vector space of \emph{flat sections} $\Gamma_0(A)$ of $A$ is by definition
	\[
	\Gamma_0(A):=\left\{\overline{Y}\in \Gamma(A/B) \, | \, \nabla_X \overline{Y}=0, \, \text{for all } X\in \Gamma(B)\right\},
	\]
	and the normalizer $N(\Gamma(B))$ of $\Gamma(B)$ in the Lie algebra $\Gamma(A)$ is given by
	\[
	N(\Gamma(B))=\left\{X\in \Gamma(A) \, | \, [X,Y]_A \in \Gamma(B) \, \text{for all } Y\in \Gamma(B)\right\}.
	\]
	
	If we denote by $\widetilde{\pi}\colon N(\Gamma(B))\to \Gamma(A/B)$ the restriction to $N(\Gamma(B))$ of the quotient $C^\infty(M)$-module morphism $\pi\colon \Gamma(A)\to \Gamma(A/B)$ then we clearly get that $\ker(\widetilde{\pi})=\Gamma(B)$ and, moreover, $\operatorname{im}(\widetilde{\pi})=\Gamma_0(A)$. Indeed, pick $\overline{X}\in \Gamma(A/B)$, so that $X\in\Gamma(A)$. Thus, $X\in N(\Gamma(B))$ if and only if $[X, Y]_A\in \Gamma(B)$ for all $Y\in \Gamma(B)$, which in turn happens if and only if for any $Y\in \Gamma(B)$ we have that
	\[
	0=\overline{[X,Y]_A} = \nabla_Y \overline{X}.
	\]
	
	So, the latter holds if and only if $\overline{X}\in \Gamma_0(A)$. It follows that $\Gamma_0(A)\cong N(\Gamma(B))/\Gamma(B)$. Additionally, as consequence of the Jacobi identity for $[-,-]_A$ in $\Gamma(A)$, we can deduce that $N(\Gamma(B))$ is actually a Lie subalgebra of $\Gamma(A)$ and that $\Gamma(B)$ is an ideal in $N(\Gamma(B))$. In other words, $N(\Gamma(B))/\Gamma(B)$ is a Lie algebra and so $\Gamma_0(A)$ is a Lie algebra as well, whose structure is such that
	\[
		[\overline{X}, \overline{Y}]_{\Gamma_0(A)}=\overline{[X,Y]_A}.
	\]

	Let us now consider a fiberwise surjective Lie algebroid morphism $\Phi\colon A_1\to A_0$ covering a submersion $\varphi\colon M_1\to M_0$. Fix $B_0\subset A_0$ a wide Lie subalgebroid. Then $B_1=\Phi^{-1}(B_0)\subset A_1$ is a wide Lie subalgebroid, the induced map $\Phi\colon B_1\to B_0$, $u_x\mapsto \Phi(u_x)$, is a fiberwise surjective Lie algebroid morphism covering $\varphi\colon M_1\to M_0$, and the induced map $\overline{\Phi}\colon A_1/B_1 \to A_0/B_0$, $\overline{u_x}\mapsto \overline{\Phi(u_x)}$, is a fiberwise invertible $VB$-morphism covering $\varphi\colon M_1\to M_0$.
	
	For $i=0,1$ consider the Bott representation $\nabla^i$ of $B_i$ on $A_i/B_i$, so that $\nabla^i_{\beta_i} \overline{\alpha_i} = \overline{[\beta_i, \alpha_i]}$, for all $\beta_i\in \Gamma(B_i)$ and $\alpha_i\in \Gamma(A_i)$. One has a corresponding de Rham complex $(\Omega^\bullet(B_i, A_i/B_i), d_{\nabla^i})$ and a Lie algebra of flat sections
	\[
	\Gamma_0(A_i):=\{\overline{\alpha_i}\in \Gamma(A_i/B_i) \, | \, d_{\nabla^i}\overline{\alpha_i}=0\}\cong \frac{N(\Gamma(B_i))}{\Gamma(B_i)}.
	\]
	
	The Lie algebroid morphism $\Phi\colon B_1\to B_0$ and the fiberwise invertible $VB$-morphism $\overline{\Phi}\colon A_1/B_1\to A_0/B_0$, both covering $\varphi\colon M_1\to M_0$, allow to draw the commutative diagram of Lie algebra morphism 
	\begin{equation}\label{diag:Bott_connections}
	\begin{tikzcd}
		B_1 \arrow[r, "\Phi"] \arrow[d, "\nabla^1"'] & B_0 \arrow[d, "\nabla^0"] \\
		D(A_1/B_1) \arrow[r, "D\overline{\Phi}"'] & D(A_0/B_0)
	\end{tikzcd},
	\end{equation}
where $D(A_i/B_i)$ stands for the Atiyah algebroid of $A_i/B_i$. Equivalently, $\Phi$ and $\overline{\Phi}$ allow one to build a cochain map
	\[
	\overline{\Phi}^\ast \colon (\Omega^\bullet(B_0, A_0/B_0), d_{\nabla^0})\to (\Omega^\bullet(B_1, A_1/B_1), d_{\nabla^1}), \quad \eta \mapsto \overline{\Phi}^\ast \eta, 
	\]
	where one sets $(\overline{\Phi}^\ast \eta)_x= \overline{\Phi}_x^{-1}\circ \eta_{\varphi(x)} \circ \wedge^\bullet \Phi_x$, for all $x\in M_1$ and $\eta\in \Omega^\bullet(B_0, A_0/B_0)$. The commutativity of Diagram \ref{diag:Bott_connections} may be verified as follows. Let $b\in B_{1,x}$, with $x\in M_1$, and $\overline{X}\in \Gamma(A_0/B_0)$. Pick $Y'\in \Gamma(B_1)$ such that $Y'_x=b$, and $X'\in\Gamma(A_1)$ and $Y\in \Gamma(B_0)$ such that $\Phi\circ X'= X\circ \varphi$ and $\Phi\circ Y'= Y\circ \varphi$. Hence, we get that
	\[
		D\overline{\Phi}(\nabla^1_b)(\overline{X}) = \Phi(\nabla^1_b \overline{\Phi}^\ast \overline{X}) = \Phi(\nabla^1_b \overline{X'}) = \Phi(\overline{[Y', X']_x})=\overline{\Phi([Y', X']_x)}= \overline{[Y', X']_{\varphi(x)}} = \nabla^0_{\Phi(b)} \overline{X}.
	\] 
	
	In particular, $\overline{\Phi}^\ast$ sends $1$-cocycles to $1$-cocycles and so it immediately induces an injective linear map
	\[
	\overline{\Phi}^\ast \colon \Gamma_0(A_0)\to \Gamma_0(A_1), \quad \overline{\alpha_0}\mapsto \overline{\Phi}^\ast \overline{\alpha_0}.
	\]
	
	Actually, one can prove that the latter is a Lie algebra morphism. Indeed, given two arbitrary elements $\alpha_0, \alpha_0'\in \Gamma(A_0)$, pick $\alpha_1, \alpha_1'\in \Gamma(A_1)$ such that $\overline{\Phi}_x\circ \alpha_{1,x} =\alpha_{0,\varphi(x)}$ and $\overline{\Phi}_x\circ \alpha_{1,x}' =\alpha_{0,\varphi(x)}'$, for all $x\in M_1$. This implies, by definition of $\overline{\Phi}$, that $\overline{\Phi}^\ast (\overline{\alpha_0})=\overline{\alpha_1}$ and $\overline{\Phi}^\ast (\overline{\alpha_0'})=\overline{\alpha_1'}$. Moreover, since $\Phi\colon A_1\to A_0$ is a Lie algebroid morphism, this also implies that $\Phi_x([\alpha_1, \alpha_1']_x)= [\alpha_0, \alpha_0']_{\varphi(x)}$, for all $x\in M_1$. Thus
	\[
	(\overline{\Phi}^\ast [\overline{\alpha_0}, \overline{\alpha_0'}])_x= \overline{\Phi}_x^{-1} (\overline{[\alpha_0, \alpha_0']_{\varphi(x)}}) = \overline{[\alpha_1, \alpha_1']_x},
	\]
	for all $x\in M_1$.

	
\end{document}